\documentclass[smallextended]{amsart}
\usepackage[top=1in, bottom=1in, left=1.25in, right=1.25in]{geometry}
\usepackage{mathrsfs}
\usepackage{amssymb}
\usepackage{amsfonts}
\usepackage[linkcolor=red,citecolor=blue]{hyperref}
\usepackage{latexsym}
\usepackage[all,cmtip]{xy}
\usepackage{amsmath}
\usepackage{color,ifsym}
\usepackage{graphicx}
\usepackage{fancybox}
\usepackage{longtable}
\usepackage{lscape}
\usepackage{float}
\usepackage{mathdots}
\usepackage{enumerate}
\usepackage{multirow}

\newtheorem{thm}{Theorem}
\newtheorem{cor}[thm]{Corollary}

\newtheorem{conj}[thm]{Conjecture}

\newtheorem{rem}{{\it Remark}} 




\begin{document}
	\title{Universal hierarchical structure of reducibility of Harish-Chandra parabolic induction}
	\author{CAIHUA LUO}
	\address{Department of Mathematical Sciences, Chalmers University of Technology and the University of Gothenburg, Chalmers tv\"{a}rgata 3, se-412 96 g\"{o}teborg}
	\email{caihua@chalmers.se}
	\date{}
	\subjclass[2010]{22E35,22E50}
	\keywords{Parabolic induction, Generic irreducibility property, Special exponents, Intertwining operator, Jacquet module, Knapp--Stein R-group, Covering groups}
\maketitle
	\begin{abstract}
		Given a supercuspidal representation $\sigma$ of a parabolic subgroup $P$ of reductive group $G$, we discover a universal hierarchical structure of reducibility of the parabolic induction $Ind^G_P(\sigma)$, i.e. always irreducible from some Levi-level up. As its applications, we provide a new simple proof of the generic irreducibility property of parabolic induction, and prove Clozel's finiteness conjecture of special exponents under some conditions. Indeed, those conditions are predicted by two conjectures of Shahidi which in some sense are proved for classical groups by Arthur in his monumental book--The Endoscopic Classification of Representations: Orthogonal and Symplectic Groups. At last, naturally, such type simple beautiful structure theorem should be conjectured to hold in general, i.e. if the ``reducibility conditions'' of a general parabolic induction lies in some Levi subgroup, then it is always irreducible from this Levi up.
	\end{abstract}

\section{Introduction}

In this paper, we study the structure of parabolic induction of admissible representations of reductive groups defined over a non-archimedean local field $F$ of characteristic 0. Of course, the problem goes back to the era of Harish-Chandra and marches on in the century of Langlands for its deep connection with number theory, many great mathematicians have devoted their efforts on this topic (cf. \cite{knappstein,knapp1976classification,langlands,jacquet,howe1976any,bernstein1977induced,speh1980reducibility,kazhdan1987proof,casselman1995introduction,shahidi1990proof,moeglin2002construction,moeglin2003points}).

Let $G$ be a connected reductive group defined over $F$, and $P=MN$ be a parabolic subgroup of $G$ with $M$ its Levi subgroup and $N$ nilpotent radical of $P$. For an irreducible admissible representation $(\sigma, V_\sigma)$ of $M$, a fundamental question originating from Harish-Chandra's theory is to give a reasonable criterion of the reducibility of the parabolic induction $I^G_P(\sigma)$:
\[I^G_P(\sigma)=Ind^G_P(\sigma):=\{f:G\rightarrow V_\sigma \mbox{ smooth }|f(mng)=\delta_P(m)^{\frac{1}{2}}\sigma(m)f(g), \forall m\in M,n\in N,g\in G \}. \]
If $G$ is a finite group, such a question is answered perfectly by Mackey's theory. Indeed, analogous Mackey theory does exist for our $G$ which is the so-called Bernstein--Zelevinsky geometrical lemma (cf. \cite{bernstein1977induced,casselman1995introduction,waldspurger2003formule}), and analogous simple criterion does exist for tempered inductions $I^G_P(\sigma)$, i.e. $\sigma$ is a discrete series representation. Such a criterion is the so-called Knapp--Stein $R$-group theory (cf. \cite{knappstein,knapp1975singular,silberger1978knapp,luo2017R}). But there is an essential obstruction for general $\sigma$: the restriction functor, i.e. Jacquet functor does not preserve unitarity as opposed to the finite group case. If $\sigma$ is a supercuspidal representation, i.e. does not arise from proper parabolic inductions, a simple criterion has been discovered recently which originates from Muller's criterion for principal series, i.e. $\sigma$ is a character (cf. \cite{muller1979integrales,kato1982irreducible,luo2018muller}). For general $\sigma$ and general $G$, there is no essential progress (to my best knowledge). Inspired by our Muller type irreducibility criterion for $\sigma$ supercuspidal, we give a first structural answer for a large class of $\sigma$. Denote by $\rho$ the supercuspidal support of $\sigma$ on $P_0=M_0N_0\subset P=MN$, i.e. $\sigma\in JH(I^M_{M\cap P_0}(\rho))$ the set of Jordan--H\"{o}lder constituents of $I^M_{M\cap P_0}(\rho)$, then our universal hierarchical irreducibility criterion is as follows: please see the context for the detail,
\begin{thm}(cf. Theorem \ref{prod}) Assume the reducibility conditions given by Muller type irreducibility criterion for $I^G_{P_0}(\rho)$ lie in $M$, then
	$I^G_P(\tau)$ is always irreducible for any $\tau\in JH(I^M_{M\cap P_0}(\rho))$.
\end{thm}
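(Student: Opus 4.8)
The plan is to trade the claimed irreducibility of all the inductions $I^G_P(\tau)$ for a single length identity, and then to prove that identity by ``pushing'' the Muller‑type reducibility data of $I^G_{P_0}(\rho)$ down into $M$.

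\emph{Reduction to a length inequality.} By transitivity of parabolic induction $I^G_{P_0}(\rho)=I^G_P(\pi)$ with $\pi:=I^M_{M\cap P_0}(\rho)$. Write $\ell_G$, $\ell_M$ for lengths as $G$‑, resp.\ $M$‑modules. Choosing a Jordan--H\"older filtration of the $M$‑module $\pi$ with simple subquotients $\tau_1,\dots,\tau_r$ (the elements of $JH(I^M_{M\cap P_0}(\rho))$ counted with multiplicity) and applying the exact functor $I^G_P(\cdot)$, we obtain a $G$‑filtration of $I^G_{P_0}(\rho)$ whose subquotients are $I^G_P(\tau_1),\dots,I^G_P(\tau_r)$. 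Since each $I^G_P(\tau_i)\neq 0$,
\[
\ell_G(I^G_{P_0}(\rho))=\sum_{i=1}^r\ell_G(I^G_P(\tau_i))\ \ge\ r=\ell_M(\pi),
\]
with equality if and only if every $I^G_P(\tau_i)$ is irreducible. So the theorem is equivalent to the single inequality $\ell_G(I^G_{P_0}(\rho))\le\ell_M(I^M_{M\cap P_0}(\rho))$.

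\emph{Using the hypothesis.} The hypothesis says that every reduced root $\alpha\in\Sigma(A_{M_0},G)$ carrying a Muller‑type reducibility condition for $\rho$ — be it of the ``$w_\alpha\rho\cong\rho$ with nontrivial contribution to the $R$‑group'' type or of the ``$\rho$ is $\alpha$‑special'' (special‑exponent) type — lies in $\Sigma(A_{M_0},M)$; in particular the stabiliser $W_\rho=\{w:w\rho\cong\rho\}$ is contained in $W(A_{M_0},M)$, and for every $\alpha\in\Sigma(A_{M_0},G)\setminus\Sigma(A_{M_0},M)$ the rank‑one induction $I^{M_\alpha}_{P_0\cap M_\alpha}(\rho)$ is irreducible, so the normalised intertwining operator $A(w_\alpha,\rho)$ is an isomorphism. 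I would first deduce that $I^G_P(\cdot)$ is fully faithful on the full subcategory generated by $JH(\pi)$: for irreducible $\tau_i,\tau_j\in JH(\pi)$, Bernstein's second adjointness gives
\[
\operatorname{Hom}_G(I^G_P(\tau_i),I^G_P(\tau_j))=\operatorname{Hom}_M(\tau_i,\ r_{\overline P}I^G_P(\tau_j)),
\]
and the Bernstein--Zelevinsky geometric lemma expresses $r_{\overline P}I^G_P(\tau_j)$ as an iterated extension indexed by $W_M\backslash W(A_{M_0},G)/W_M$: the closed stratum contributes $\tau_j$, while every other stratum has cuspidal support a $W(A_{M_0},G)$‑translate $w\cdot\rho$ with $w\notin W_M\cdot W_\rho$, so — because $W_\rho\subseteq W_M$ by the hypothesis — it cannot share a constituent with $\tau_i$ and contributes nothing. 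Hence $\operatorname{Hom}_G(I^G_P(\tau_i),I^G_P(\tau_j))=\operatorname{Hom}_M(\tau_i,\tau_j)$; in particular $\operatorname{End}_G(I^G_P(\tau_i))=\mathbb{C}$, so each $I^G_P(\tau_i)$ is at least indecomposable.

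\emph{The main obstacle.} Full faithfulness alone does not force $I^G_P(\tau_i)$ to be irreducible, because $I^G_{P_0}(\rho)$ need not be semisimple once $\rho$ is non‑unitary (already for $G=GL_2$ and $\rho$ a character of the torus with $\rho\circ\alpha^\vee=|\cdot|$), so $\ell_G(I^G_{P_0}(\rho))$ cannot be read off $\dim_{\mathbb{C}}\operatorname{End}_G(I^G_{P_0}(\rho))$; ruling out any extra composition factor is the heart of the matter. I would split $\rho=\rho^{\mathrm{u}}\otimes\nu$ into a unitary supercuspidal $\rho^{\mathrm{u}}$ and a positive real unramified twist $\nu$. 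Along the unitary directions the Knapp--Stein $R$‑group governs the decomposition, and the previous step forces the $R$‑group of $\rho$ in $G$ to coincide with that of $\rho$ in $M$ together with its $2$‑cocycle, so that part of $I^G_{P_0}(\rho)$ is transported from $\pi$ without change of length. Along the remaining directions I would invoke the Langlands classification, writing each $\tau_i$ as a Langlands quotient $J^M(Q,\delta,\mu)$ and comparing $I^G_P(\tau_i)$ with the standard module $I^G_{Q'}(\delta\otimes\mu)$ of $G$: by the hypothesis, any Jacquet‑module exponent of this standard module that could create a second irreducible constituent points in an $M$‑direction, hence is already accounted for among the $\tau_i$. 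Assembling the two analyses while carefully tracking cuspidal supports, exponents and multiplicities through the geometric lemma — and checking that no reducibility is introduced by a root outside $M$ — yields $\ell_G(I^G_{P_0}(\rho))\le\ell_M(\pi)$, which by the first step is exactly the asserted irreducibility of $I^G_P(\tau)$ for all $\tau\in JH(I^M_{M\cap P_0}(\rho))$. It is precisely this last bookkeeping, rather than any single clever device, where I expect the real work to lie.
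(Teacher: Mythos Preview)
Your reduction to the length equality $\ell_G(I^G_{P_0}(\rho))=\ell_M(I^M_{M\cap P_0}(\rho))$ is correct and is precisely what the paper establishes (phrased there as $\#JH(I^G_P(\sigma))=\#JH(I^L_{L\cap P}(\sigma))$). From that point on the arguments diverge, and yours stops short at the place you yourself flag.

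The full-faithfulness computation, even if granted, gives only $\operatorname{End}_G(I^G_P(\tau_i))=\mathbb{C}$, hence indecomposability. Your proposed cure---split off the unitary directions via the $R$-group and handle the rest by Langlands-quotient and exponent bookkeeping---remains a plan, not a proof: the sentence ``any Jacquet-module exponent \dots\ points in an $M$-direction, hence is already accounted for among the $\tau_i$'' is a restatement of the claim, and nothing written actually forces $\ell_G\le\ell_M$. (Aside: the inference $W_\rho\subset W(A_{M_0},M)$ is not immediate from the Working Hypothesis either. The hypothesis demands only $R_\rho\subset W^M_{M_0}$ and that co-rank one reducibility sit inside $M$; one may still have $w_\alpha\rho\cong\rho$ for a root $\alpha$ outside $M$ with the rank-one induction irreducible, putting $w_\alpha\in W_\rho^0\setminus W^M_{M_0}$. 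So even the Hom-computation needs more care.)

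The paper bypasses all of this with a direct intertwining-operator argument. For coset representatives $w\in W_{M_0}(M)$ of $W^M_{M_0}\backslash W_{M_0}$ chosen so that $w\cdot(\Phi^{M,0}_{M_0})^+>0$, one shows that each $A(w,\rho):I^G_{P_0}(\rho)\to I^G_{P_0}(\rho^w)$ is an \emph{isomorphism}, by reading the multiplicative formula
\[
J_{P_0|P_0^w}(\rho^w)\,J_{P_0^w|P_0}(\rho)=\prod_{\alpha\in\Phi_{M_0}(P_0)\cap\Phi_{M_0}(\overline{P_0^w})} j_\alpha(\rho).
\]
The choice of $w$ makes this product avoid $\Phi^{M,0}_{M_0}$ altogether; for every remaining $\alpha$ the hypothesis forces the rank-one induction to be irreducible, so $j_\alpha(\rho)$ is either a nonzero finite scalar or has a double pole removable by a residue. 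These isomorphisms collapse the partition of $W_{M_0}$ describing $JH(I^G_{P_0}(\rho))$ to a partition of $W^M_{M_0}$ alone, which is exactly $JH(I^M_{M\cap P_0}(\rho))$. That is the device your proposal is missing.
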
 
Such a criterion exists for tempered inductions which is a key ingredient of proving Howe's finiteness conjecture (cf. \cite{clozel1989orbital,luo2017howe}). Indeed, Clozel also proposed a beautiful conjecture as a key ingredient in his first attempt to prove Howe's finiteness conjecture (see \cite{clozel1985conjecture}). As an application of the above theorem, we serve you a simple intuitive proof of such a beautiful conjecture under the assumption that it holds for co-rank one cases.

Let us take a brief look at the main idea of the proof of Theorem \ref{prod} for regular $\rho$, i.e. no non-trivial Weyl element fixes it. Our argument is quite classical. It relies heavily on the Bernstein--Zelevinsky geometrical lemma and the exactness of Jacquet functor (cf. \cite{bernstein1977induced,casselman1995introduction,silberger2015introduction,waldspurger2003formule}): the constituents of the generalized principal series $I^G_{P_0}(\rho)$ are parametrized by the partition of the set of the relative Weyl group $W_{M_0}:=N_G(M_0)/M_0$. If our reducibility conditions of $I^G_{P_0}(\rho)$ given by our Muller type irreducibility criterion lie in some standard Levi subgroup $M$, then it gives rise to a partition of $W_{M_0}^M:=N_M(M_0)/M_0\subset W_{M_0}$. Then to show the irreducibility of $I^G_P(\sigma)$ for any $\sigma\in JH(I^M_{M\cap P_0}(\rho))$, the novelty of our argument is to find a good set $S$ of representatives of the quotient $W_{M_0}^M\backslash W_{M_0}$, such that    
\[I^G_P(\rho^{w_1})\simeq I^G_P(\rho^{w_1w}) \]
for any $w_1\in W^M_{M_0}$ and $w\in S$.

It seems that our argument is quite restrictive. But it also seems that this may be the only universal argument we could come up with nowadays which has been practiced intelligently by Bernstein--Zelevinsky, Silberger, Jantzen, Moeglin, the Tadi{\'c} school etc (please refer to \cite{bernstein1977induced,jantzen1997supports,moeglin2002construction,tadic1998regular,lapid2017some} for a glimpse). Inspired by a conjectural Muller type irreducibility criterion for general parabolic inductions (see \cite[Conjecture 4.4]{luo2018muller}), a detailed study of the structure of Jacquet modules of a general parabolic induction may give us hope to prove the following conjectural universal irreducibility structure (Please see the context for the details):
\begin{conj}(see Conjecture \ref{conj})
	Given a parabolic induction $I^G_{P=MN}(\sigma)$ with $\sigma$ an irreducible admissible representation of $M$, if the ``reducibility conditions'' of $I^G_P(\sigma)$ lie in some standard Levi subgroup $L$ of a parabolic subgroup $Q=LV\subset P=MN$, then $I^G_Q(\tau)$ is always irreducible for any $\tau\in JH(I^L_{L\cap P}(\sigma))$.
\end{conj}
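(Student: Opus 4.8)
\medskip
\noindent\textbf{A strategy towards the conjecture.}
The plan is to lift the mechanism behind Theorem~\ref{prod} one Levi level higher, taking as input the conjectural Muller type irreducibility criterion for general parabolic inductions \cite[Conjecture 4.4]{luo2018muller}: it is precisely that criterion which makes the phrase ``reducibility conditions'' of $I^G_P(\sigma)$ rigorous, asserting that $I^G_P(\sigma)$ is irreducible if and only if a finite, explicitly described family of co-rank-one data --- the ``special exponents'' attached to the adjacent pairs of Levi subgroups sitting between $P$ and $G$ --- all vanish. Granting it, the hypothesis that these conditions ``lie in $L$'' says that every such datum is attached to a Levi contained in $L$; in particular, along every direction transverse to $L$ the pertinent rank-one reducibility is trivial, so the corresponding standard intertwining operators are isomorphisms there. (Throughout we read the statement with $P=MN\subseteq Q=LV$ and $M\subseteq L$, so that $L\cap P$ is a parabolic of $L$ with Levi $M$ and $\tau\in JH(I^L_{L\cap P}(\sigma))$ is meaningful, in parallel with the set-up of Theorem~\ref{prod}.)

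First I would fix $\tau\in JH(I^L_{L\cap P}(\sigma))$ and use transitivity of parabolic induction to write $I^G_P(\sigma)=I^G_Q\bigl(I^L_{L\cap P}(\sigma)\bigr)$; since $I^L_{L\cap P}(\sigma)$ is of finite length, every constituent of $I^G_Q(\tau)$ is a constituent of $I^G_P(\sigma)$, so it suffices to prove that $I^G_Q(\tau)$ is irreducible. As in the proof of Theorem~\ref{prod}, the engine is the Bernstein--Zelevinsky geometrical lemma together with the exactness of the Jacquet functor \cite{bernstein1977induced,casselman1995introduction,waldspurger2003formule}, applied to the Jacquet module of $I^G_Q(\tau)$ along $Q$, whose subquotients are assembled from twists of Jacquet modules of $\tau$ indexed by minimal-length representatives of the relevant double cosets, the trivial one recovering $\tau$. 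The decisive step --- playing the role of the isomorphism $I^G_P(\rho^{w_1})\simeq I^G_P(\rho^{w_1w})$, $w_1\in W^M_{M_0}$, $w\in S$, that drives the proof of Theorem~\ref{prod} --- is to single out a good set $S$ of representatives of $W^L_M\backslash W_M$, where $W_M:=N_G(M)/M$ and $W^L_M:=N_L(M)/M$, along which the relevant induced modules stay isomorphic; here one uses critically that all reducibility of $I^G_P(\sigma)$ is confined to $L$. Counting the constituents of $I^G_P(\sigma)$ through these identifications and matching the total against the length of $I^L_{L\cap P}(\sigma)$ should then force $I^G_Q(\tau)$ to be irreducible; an outer induction on the co-rank of $Q$ --- passing from $Q$ to larger parabolics $R=L_RU_R\supseteq Q$ with $L\subseteq L_R$ and invoking the statement inside $L_R$ --- would organize the bookkeeping and reduce everything to the co-rank-one case.

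The hard part, and the reason this remains a conjecture, is twofold. First, the Muller type criterion is itself established only for $\sigma$ supercuspidal (which is how Theorem~\ref{prod} arises) and for $\sigma$ a discrete series, so for a general irreducible $\sigma$ the very content of the ``reducibility conditions'' of $I^G_P(\sigma)$ rests on \cite[Conjecture 4.4]{luo2018muller}. Second, and more seriously, as soon as $\sigma$ is not supercuspidal the Jacquet modules in play are in general neither multiplicity-free nor semisimple, so the clean parametrization of the constituents of the larger induction by Weyl-coset orbits of a regular datum --- the backbone of the argument for Theorem~\ref{prod} --- collapses: one must instead track the socle and cosocle filtrations and control the standard intertwining operators through a Knapp--Stein $R$-group analysis varying in families \cite{knappstein,silberger1978knapp,luo2017R,jantzen1997supports,tadic1998regular}, which for the non-generic constituents presently depends on Arthur's endoscopic classification and is therefore available only for classical and closely related groups. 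A complete proof thus seems to demand a genuinely uniform understanding of the structure of the Jacquet modules of an arbitrary parabolic induction, exactly as the paragraph preceding the conjecture anticipates.
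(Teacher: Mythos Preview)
The statement you were asked to prove is a \emph{conjecture} in the paper, not a theorem; the paper offers no proof of it and explicitly presents it as open. There is therefore nothing in the paper to compare your argument against, and you were right to frame your submission as ``a strategy towards the conjecture'' rather than a proof.

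That said, your strategy matches exactly the approach the paper itself gestures at in the sentence immediately preceding the conjecture: take the conjectural Muller type irreducibility criterion \cite[Conjecture 4.4]{luo2018muller} as the meaning of ``reducibility conditions'', and then attempt to run the machinery of Theorem~\ref{prod} (Bernstein--Zelevinsky geometrical lemma, exactness of Jacquet functors, good coset representatives $S$ of $W^L_M\backslash W_M$, associativity of intertwining operators) one Levi level up. You also correctly flag the orientation typo in the statement (one needs $P\subseteq Q$, $M\subseteq L$, not the reverse). Finally, your diagnosis of why this remains open --- the Muller criterion is conjectural beyond the supercuspidal/discrete-series cases, and for non-supercuspidal $\sigma$ the Jacquet modules are neither semisimple nor multiplicity-free, so the clean Weyl-orbit bookkeeping that powers Theorem~\ref{prod} breaks down --- is accurate and is essentially what the paper's remark after the conjecture alludes to. In short: no gap relative to the paper, because the paper contains no proof; your outline is the intended heuristic, with the obstacles honestly identified.
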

\begin{rem}
	At first glance, Conjecture \ref{conj} may look meaningless and ridiculous. But some supportive examples of a conjectural Muller type irreducibility criterion for general parabolic induction, i.e. \cite[Conjecture 4.4]{luo2018muller} could guide us to a right direction. Those examples are parabolic induction representations inducing from essentially discrete series and standard modules which many mathematicians have investigated since the era of Harish-Chandra.
\end{rem}
Inspired by Goldberg and Jantzen's product formulas for quasi-split classical groups (cf. \cite{goldberg1994reducibility,jantzen1997supports,jantzen2005duality}), another direction we could cook up with the help of our new notion of $R$-group is a Goldberg--Jantzen type product formula as follows: (Please see the context for the details)
\begin{conj}(see Conjecture \ref{gjconj})
	Let $\nu\in \mathfrak{a}_M^*$ and $\sigma$ be a unitary supercuspidal representation of $M$. For the generalized principal series representation $I^G_P(\nu,\sigma):=Ind^G_P(\sigma\otimes\nu)$ of $G$, there is a one-one correspondence:
	\[JH(I^G_P(\nu,\sigma))\leftrightarrows \prod_i JH(I^{M_i}_{M_i\cap P}(\nu,\sigma)). \]
\end{conj}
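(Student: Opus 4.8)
\medskip
\noindent\textbf{Strategy toward the product formula.} The plan is to locate the ``reducibility root system'' and then show that whatever combinatorial data parametrizes the Jordan--H\"older constituents of a generalized principal series is multiplicative along the indecomposable pieces of that root system. Let $\Sigma'=\Sigma'(\nu,\sigma)\subseteq\Sigma(A_M,G)$ be the reduced roots $\alpha$ for which $I^{M_\alpha}_{M_\alpha\cap P}(\nu,\sigma)$ is reducible, i.e.\ for which the Muller-type condition holds at $\alpha$, and let $W_{\nu\sigma}\subseteq W(A_M,G)$ be the stabilizer of $\sigma\otimes\nu$. First one establishes the structural statements that $\Sigma'$ is a root subsystem and that it breaks into ``linkage components'' -- with two roots of $\Sigma'$ equivalent when they are non-orthogonal or are exchanged by a reflection $s_\beta\in W_{\nu\sigma}$ with $\beta\notin\Sigma'$ -- each of which is the relative root system $\Sigma_i=\Sigma(A_M,M_i)$ of a Levi subgroup $M\subseteq M_i\subseteq G$. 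These $M_i$ are the ones in the statement; for split classical groups the needed facts are contained in the explicit reducibility and $R$-group computations of Goldberg and Jantzen \cite{goldberg1994reducibility,jantzen1997supports}, and in general they are part of what is being conjectured.

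\medskip
\noindent When the subsystem generated by $\Sigma'$ already lies in a proper standard Levi $L$, Theorem~\ref{prod} removes the ``free directions'': for each $\tau\in JH\big(I^L_{L\cap P_M}(\nu,\sigma)\big)$ the induction $I^G_Q(\tau)$ is irreducible ($Q=LV$), and since $\tau$ occurs in $r_Q\big(I^G_Q(\tau)\big)$ and the $\tau$ are the constituents of one generalized principal series, distinct $\tau$ give non-isomorphic $I^G_Q(\tau)$; with exactness of induction this produces a canonical bijection
\[
JH\big(I^G_P(\nu,\sigma)\big)\ \longleftrightarrow\ JH\big(I^L_{L\cap P_M}(\nu,\sigma)\big),\qquad \tau\longmapsto I^G_Q(\tau),
\]
reducing to the case where every reduced root of $A_M$ in $L$ is a reducibility root and $L$ is, up to a central isogeny, the product over $M$ of the $M_i$. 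In the remaining case -- already visible for $I^{Sp_4}_P$ of a product $\chi\boxtimes\chi$ of a ramified quadratic character, where the reducibility roots are the two long roots and lie in no proper Levi -- Theorem~\ref{prod} says nothing, and one argues directly in $G$: here the point of using linkage components is precisely that those two long roots, exchanged by a reflection $s_\beta$ ($\beta$ a short root, $\beta\notin\Sigma'$, $s_\beta\in W_{\nu\sigma}$), are merged into one block, so the product formula has a single factor, $G$ itself.

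\medskip
\noindent The core is then to show that the parametrization of $JH$ decomposes as a product over the blocks. In the paper's language this is the statement that the new $R$-group splits, $R(\nu,\sigma)\cong\prod_i R_{M_i}(\nu,\sigma)$: the reflections in reducibility roots that lie in $W_{\nu\sigma}$ are supported on $\Sigma'=\bigsqcup_i\Sigma_i$ and, across distinct blocks, commute and create no new roots, so they generate an internal direct product $\prod_i\big(W_{\nu\sigma}\cap W(\Sigma_i)\big)$; one also checks that no $R$-group element permutes isomorphic blocks, such a permutation being realized by reflections outside $\Sigma'$ and hence already absorbed in the passage to linkage components. Feeding these decompositions into the Bernstein--Zelevinsky geometrical lemma -- so that the relevant normalized intertwining operators factor, $A_G(w)=\bigotimes_i A_{M_i}(w_i)$ for $w=\prod_i w_i$ -- the partition of the relative Weyl group labelling $JH\big(I^G_P(\nu,\sigma)\big)$ becomes the product of the partitions labelling the $JH\big(I^{M_i}_{M_i\cap P}(\nu,\sigma)\big)$, a constituent $\pi$ corresponding to the tuple $(\pi_i)$ cut out block-by-block by its Jacquet modules; composing with the reduction above gives the one-one correspondence.

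\medskip
\noindent The main obstacle, and the reason this is still a conjecture, is that for $\nu\neq0$ the parametrization ``$JH\big(I^G_P(\nu,\sigma)\big)\leftrightarrow$ combinatorial data attached to $\Sigma'$ and $W_{\nu\sigma}$'' is not a theorem -- it is in essence the conjectural Muller-type criterion for non-supercuspidal inducing data \cite[Conjecture~4.4]{luo2018muller} -- so that even a full proof of the $R$-group splitting would not by itself classify the constituents; moreover, when $\Sigma'$ generates no proper Levi subsystem there is nothing to reduce, and the splitting must be extracted by a genuine structural computation, exactly the kind Goldberg and Jantzen performed by hand for $Sp_{2n}$, $SO_n$ and the unitary groups \cite{goldberg1994reducibility,jantzen1997supports}. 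A uniform argument therefore appears to require the understanding of Jacquet modules of general parabolic inductions that the introduction flags as missing; short of that, the construction above should settle the quasi-split classical case, and more generally any case where the conjectural Muller criterion is available.
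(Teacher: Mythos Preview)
The statement you are asked to prove is labelled \emph{Conjecture} in the paper, and the paper offers no proof of it; it is stated immediately after Corollary~\ref{prod1} as an open problem, with the splitting $R_\sigma=\prod_i R_{\sigma,i}$ built into the hypotheses rather than derived. There is therefore no ``paper's own proof'' to compare your attempt against.

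Your write-up is not a proof either, and you say so yourself in the final paragraph: the parametrization of $JH\big(I^G_P(\nu,\sigma)\big)$ by combinatorial data attached to $\Sigma'$ and $W_{\nu\sigma}$ is exactly the missing ingredient, and without it the factorization $A_G(w)=\bigotimes_i A_{M_i}(w_i)$ does not by itself yield a bijection on constituents. So what you have written is a plausible strategy sketch, correctly flagging where Theorem~\ref{prod} buys a reduction and where it does not, and correctly identifying the remaining obstruction as essentially \cite[Conjecture~4.4]{luo2018muller}. That is an honest assessment of the situation and is consistent with the paper's own framing.

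One point worth noting: your blocks $M_i$ are built from ``linkage components'' of the \emph{reducibility} roots $\Sigma'(\nu,\sigma)$, whereas the paper defines its $M_i$ from the irreducible pieces of $\Phi_\sigma=\{\alpha\in\Phi_M:\ w_\alpha.\sigma=\sigma\}$, with the $R$-group splitting along those pieces taken as a standing assumption. These two decompositions need not coincide in general (your $Sp_4$ example already hints at the subtlety), so even at the level of formulating the conjecture you are working with a slightly different object than the paper. If you intend to pursue this, you should first reconcile your linkage-component $M_i$ with the paper's $\Phi_{\sigma,i}$-based $M_i$, or else argue that the resulting product formulas are equivalent.
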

Here $\Phi_\sigma:=\{\alpha\in\Phi_M:~w_\alpha.\sigma=\sigma \}$ is a root system and decomposes into irreducible pieces, i.e. $\Phi_\sigma=\sqcup_i\Phi_{\sigma,i}$. Each irreducible root system $\Phi_{\sigma,i}$ gives rise to a Levi subgroup $M_i\supset M$ which is defined by
\[M_i:=C_G((\bigcap\limits_{\alpha\in\Phi_{\sigma,i}}Ker(\alpha) )^0). \]
In the meantime we assume that $R_\sigma$ associated to $\sigma$ decomposes into a product of small pieces in the same pattern as does $\Phi_\sigma$, i.e.
\[R_\sigma=\prod_i R_{\sigma,i} \]
with $R_{\sigma,i}$ a subgroup of the relative Weyl group $W^{M_i}_M=N_{M_i}(M)/M$ of $M$ in $M_i$.

As a by-product of Theorem \ref{prod}, we could reduce Clozel's finiteness conjecture of special exponents to co-rank one cases. There is no harm to assume that $G$ is of compact center. For a discrete series representation $\sigma$ of $G$, we denote its associated supercuspidal support to be $\rho$ which is a supercuspidal representation of some Levi subgroup $M$ of $G$. Denote by $\omega_\rho$ the unramified part of the central character of $\rho$, i.e. $\omega_\rho\in \mathfrak{a}_{M,\mathbb{C}}^\star:=Hom_F(M,\mathbb{G}_m)\otimes \mathbb{C}$. Such a character is called a \underline{special exponent}. Clozel conjectured that the set of special exponents is finite (see \cite{clozel1985conjecture,clozel1989orbital}).

\begin{thm}(cf. Theorem \ref{clozel})
	The set of special exponents is finite provided it is true for co-rank one cases.
\end{thm}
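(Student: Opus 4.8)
The plan is to feed the supercuspidal support of a discrete series into Theorem~\ref{prod}, which collapses the problem onto co-rank one Levi subgroups, and then to finish by a finite count. Normalise so that $Z(G)$ is compact, hence $\mathfrak{a}_G^\star=0$. Since $G$ has finitely many Levi subgroups up to conjugacy, it suffices to fix a Levi $M_0$ and bound the set of $\omega_\rho$ that occur as $\sigma$ runs over the discrete series of $G$ whose supercuspidal support $\rho$ lives on $M_0$; write $\rho=\rho_u\otimes|{\cdot}|^{\omega_\rho}$ with $\rho_u$ unitary supercuspidal and $\omega_\rho\in\mathfrak{a}_{M_0}^\star$ (reading ``special exponent'' as the non-unitary part, as in the conjecture), and fix $P_0=M_0N_0$, so that $\sigma\in JH(I^G_{P_0}(\rho))$.

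First I would show that, for such a $\sigma$, the reducibility conditions attached to $I^G_{P_0}(\rho)$ by the Muller type criterion cannot lie in any \emph{proper} standard Levi subgroup $M\supsetneq M_0$. If they did, with $P=MN\supseteq P_0$, then by induction in stages and exactness of parabolic induction $\sigma\in JH(I^G_P(\tau))$ for some $\tau\in JH(I^M_{M\cap P_0}(\rho))$; Theorem~\ref{prod} then forces $I^G_P(\tau)$ irreducible, so $\sigma\simeq I^G_P(\tau)$ would be a full parabolic induction from the proper parabolic $P$ --- impossible for a discrete series (a square-integrable representation is never a full parabolic induction from a proper parabolic; compare Casselman's square-integrability criterion applied to $r_P(\sigma)$ and to $r_{\bar P}(\sigma)$, whose exponents cannot both contain the $A_M$-central exponent of $\tau$, since $\Delta\setminus\Delta_M\neq\emptyset$). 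Hence the set $R_\rho\subseteq\Phi(G,A_{M_0})$ of ``reducibility roots'' picked out by the criterion lies in no $\Phi(M,A_{M_0})$ with $M$ proper; as $\Phi(G,A_{M_0})$ spans $\mathfrak{a}_{M_0}^\star$ and $\mathfrak{a}_G^\star=0$, this forces the coroots $\{\alpha^\vee:\alpha\in R_\rho\}$ to span $\mathfrak{a}_{M_0}$.

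Next, for each $\alpha\in\Phi(G,A_{M_0})$ set $M_{\langle\alpha\rangle}:=C_G\big((\ker\alpha\cap A_{M_0})^\circ\big)$, a Levi containing $M_0$ of co-rank one over it. By the explicit shape of the Muller type criterion the membership $\alpha\in R_\rho$ is exactly the co-rank one condition that $I^{M_{\langle\alpha\rangle}}_{M_0\cap P_0}(\rho)$ be reducible; away from the unitary point $\langle\omega_\rho,\alpha^\vee\rangle=0$ --- where reducibility yields only tempered, non-square-integrable constituents --- this in turn means $I^{M_{\langle\alpha\rangle}}_{M_0\cap P_0}(\rho)$ has a square-integrable constituent, i.e.\ that $\omega_\rho$ is a special exponent of the co-rank one group $M_{\langle\alpha\rangle}$. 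Invoking the hypothesis for each of the finitely many $M_{\langle\alpha\rangle}$, the set $S_\alpha$ of possible values $\langle\omega,\alpha^\vee\rangle$ is finite, and finite uniformly in $\rho_u$. Now, given a discrete series $\sigma$ with support on $M_0$, the previous step provides linearly independent $\alpha_1,\dots,\alpha_r\in R_\rho$ with $r=\dim\mathfrak{a}_{M_0}^\star$, so $\omega_\rho$ is determined by the tuple $(\langle\omega_\rho,\alpha_i^\vee\rangle)_{1\le i\le r}\in S_{\alpha_1}\times\cdots\times S_{\alpha_r}$; as the $\alpha_i$ range in the finite set $\Phi(G,A_{M_0})$ and each $S_{\alpha_i}$ is finite, only finitely many $\omega_\rho$ occur, and letting $M_0$ run over its finitely many conjugacy classes proves the theorem.

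The main obstacle I anticipate is not this count but the two middle identifications: that the ``reducibility conditions'' of the Muller type criterion genuinely resolve into co-rank one reducibility conditions indexed by roots (so that ``lies in a proper Levi'' is equivalent to ``the reducibility coroots fail to span''), and that co-rank one reducibility of a supercuspidal induction corresponds --- outside the exceptional unitary/$R$-group points, which have to be treated by hand --- to the existence of a square-integrable constituent, hence to a special exponent of the ambient co-rank one group. With these in place the rest is bookkeeping: Theorem~\ref{prod} supplies the spanning dichotomy, and the co-rank one hypothesis supplies the finitely many admissible values of each coordinate.
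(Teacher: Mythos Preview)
Your strategy matches the paper's: invoke Theorem~\ref{prod} to see that the ``reducibility conditions'' of $I^G_{P_0}(\rho)$ cannot sit inside any proper Levi (else $\sigma$ would be a full induction, contradicting square-integrability), and then convert this spanning statement into finitely many linear constraints coming from the co-rank one hypothesis. The two key observations --- (i) Theorem~\ref{prod} and (ii) ``a full induced representation is never discrete series'' --- are exactly the ones the paper isolates.

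The gap you flag in your last paragraph is real, and it is precisely where the paper diverges from your write-up. Your set $R_\rho$ is used twice with two incompatible meanings. In the spanning step you need $R_\rho$ to encode \emph{both} parts of the Working Hypothesis of Theorem~\ref{prod}: the co-rank one reducibility roots \emph{and} the condition $R_\sigma\subset W^L_M$. In the counting step you then identify $\alpha\in R_\rho$ with ``$I^{M_{\langle\alpha\rangle}}(\rho)$ reducible'', i.e.\ only the first part. But Theorem~\ref{prod} does not say that the co-rank one roots \emph{alone} fail to sit in a proper Levi; it says that the co-rank one roots together with the $R$-group cannot both be confined there. So the passage from ``$R_\rho$ spans'' to ``each coordinate $\langle\omega_\rho,\alpha_i^\vee\rangle$ lies in the finite co-rank one set $S_{\alpha_i}$'' is not justified as written: some of the spanning roots may come from $R_\sigma$, and for those there is no co-rank one special exponent to invoke.

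The paper resolves this by separating real and imaginary parts of $\omega_\sigma$ rather than trying to funnel everything through co-rank one special exponents. For the real part, each $w\in R_\sigma$ fixes $\mathrm{Re}(\omega_\sigma)$ exactly (since $w.\sigma=\sigma$), so the $R$-group contributes genuine linear constraints; combined with the finitely many co-rank one reducibility hyperplanes (the Induction Assumption) and the spanning statement, this pins $\mathrm{Re}(\omega_\sigma)$ to a finite set. For the imaginary part the paper argues separately: there are finitely many possible $R_\sigma$, and each element of $R_\sigma$ has finite order, so the fixed-point conditions modulo the lattice of unramified characters again leave only finitely many solutions. Your proposal would be completed by inserting this real/imaginary split in place of the sentence ``outside the exceptional unitary/$R$-group points, which have to be treated by hand''.
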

Indeed, the generic co-rank one case is a result of the profound Langlands--Shahidi theory, and the general co-rank one case follows from two conjectures of Shahidi (cf. \cite[Conjectures 9.2 \& 9.4]{shahidi1990proof}). Those two conjectures for classical groups in some sense are by-products of Arthur's standard model argument in his monumental book \cite{arthur2013endoscopic}. The main idea of the proof of Clozel's finiteness conjecture is as follows:
\begin{enumerate}[(i)]
	\item A full induced representation $I^G_P(\sigma)$ can never be discrete series.
	\item An invertible matrix has only one solution.
\end{enumerate}
To illustrate the simple ideas, let us take a look at the real parts of special exponents, if $I^G_P(\rho)$ contains a discrete series subquotient, then the reducibility conditions must generate the whole vector space $\mathfrak{a}^\star_{M,\mathbb{C}}$. Otherwise, there exists a proper parabolic subgroup $Q=LV\supset P$ such that the reducibility conditions lie in $L$, then our universal hierarchical irreducibility criterion implies that $I^G_Q(\tau)$ is always irreducible for any $\tau\in JH(I^L_{L\cap P}(\rho))$, whence any constituent of $I^G_P(\rho)$ can never be discrete series. Contradiction. Thus $\mathfrak{a}^\star_{M,\mathbb{C}}$ is generated by the reducibility conditions. Indeed, we recently learned that such a claim is also a corollary of an old result of Harish-Chandra (cf. \cite[Theorem 5.4.5.7]{silberger2015introduction}, \cite[Theorem 3.9.1]{silberger1981discrete} or \cite[Corollary 8.7]{heiermann2004decomposition}). Under the assumption that the set of special exponents is finite for co-rank one cases, those reducibility conditions form a finite set of hyperplanes in $\mathfrak{a}^\star_{M,\mathbb{C}}$, therefore the finiteness for high rank cases follows easily from the fact that there exists only one solution for an invertible matrix. At last, we would like to mention that Clozel proposed an analogous strong global conjecture of finiteness of poles of Eisenstein series constructed from cusp forms which would simplify Arthur's trace formula machine significantly. We hope that the simple intuitive proof of Clozel's local finiteness conjecture could shed some light on his global conjecture in our future work.

We end the introduction by recalling briefly the structure of the paper. In Section 2, we prepare some necessary notation. In Section 3, we state and prove the universal hierarchical irreducibility criterion for Harish-Chandra parabolic inductions. In the end, as its first application, a new understanding of the generic irreducibility property of parabolic inductions is provided. In Section 4, with the aide of the universal hierarchical irreducibility criterion in the previous section, a simple intuitive proof of Clozel's finiteness conjecture is served under the assumption that it holds for the co-rank one case.    
\section{Preliminaries}
Let $G$ be a connected reductive group defined over a non-archimedean local field $F$ of characteristic 0. Denote by $|-|_F$ the absolute value, by $\mathfrak{w}$ the uniformizer and by $q$ the cardinality of the residue field of $F$. Fix a minimal parabolic subgroup $B=TU$ of $G$ with $T$ a minimal Levi subgroup and $U$ a maximal unipotent subgroup of $G$, and let $P=MN\supset B=TU$ be a standard parabolic subgroup of $G$ with $M$ the Levi subgroup and $N$ the unipotent radical.

\subsection{Structure theory}Let $X(M)_F$ be the group of $F$-rational characters of $M$, and set 
\[\mathfrak{a}_M=Hom(X(M)_F,\mathbb{R}),\qquad\mathfrak{a}^\star_{M,\mathbb{C}}=\mathfrak{a}^\star_M\otimes_\mathbb{R} \mathbb{C}, \]
where
\[\mathfrak{a}^\star_M=X(M)_F\otimes_\mathbb{Z}\mathbb{R} \]
denotes the dual of $\mathfrak{a}_M$. Recall that the Harish-Chandra homomorphism $H_P:M\longrightarrow\mathfrak{a}_M$ is defined by
\[q^{\left< \chi,H_P(m)\right>}=|\chi(m)|_F \] 
for all $\chi\in X(M)_F$.

Next, let $\Phi$ be the root system of $G$ with respect to $T$, and $\Delta$ be the set of simple roots determined by $U$. For $\alpha\in \Phi$, we denote by $\alpha^\vee$ the associated coroot, and by $w_\alpha$ the associated reflection in the Weyl group $W=W^G$ of $T$ in $G$ with
\[W:=N_G(T)/T=\left<w_\alpha:~\alpha\in\Phi\right>. \]
Denote by $w_0^G$ the longest Weyl element in $W$, and similarly by $w_0^M$ the longest Weyl element in the Weyl group $W^M:=N_M(T)/T$ of a Levi subgroup $M$. 

Likewise, we denote by $\Phi_M$ (resp. $\Phi_M^L$) the reduced relative root system of $M$ in $G$ (resp. the Levi subgroup $M\subset L$), by $\Delta_M$ the set of relative simple roots determined by $N$ and by $W_M:=N_G(M)/M$ (resp. $W_M^L$) the relative Weyl group of $M$ in $G$ (resp. $L$). In general, a relative reflection $\omega_\alpha:=w_0^{M_\alpha}w_0^M$ with respect to a relative root $\alpha$ does not preserve our Levi subgroup $M$. Denote by $\Phi^0_M$ (resp. $\Phi^{L,0}_M$) the set of those relative roots which contribute reflections in $W_M$ (resp. $W_M^L$). It is easy to see that $W_M$ preserves $\Phi_M$, and further $\Phi_M^0$ as well, as $\omega_{w.\alpha}=w\omega_\alpha w^{-1}$. Note that $W_M$ (resp. $W_M^{L,0}$) in general is larger than $W_M^0$ (resp. $W_M^{L,0}$) the one generated by those relative reflections in $G$ (resp. $L$). Denote by $\Phi_M(P)$ the set of reduced roots of $M$ in $P$.

Recall that the canonical pairing $$\left<-,-\right>:~\mathfrak{a}^\star_M\times \mathfrak{a}_M\longrightarrow\mathbb{R}$$ suggests that each  $\alpha\in \Phi_M$ will enjoy a one parameter subgroup $H_{\alpha^\vee}(F^\times)$ of $M$ satisfying: for $x\in F^\times$ and $\beta\in \mathfrak{a}^\star_M$,
\[\beta(H_{\alpha^\vee}(x))=x^{\left<\beta,\alpha^\vee\right>}. \]

\subsection{Parabolic induction and Jacquet module}For $P=MN$ a parabolic subgroup of $G$ and an admissible representation $(\sigma,V_\sigma)~ (resp.~(\pi,V_\pi))$ of $M~(resp.~G)$, we have the following normalized parabolic induction of $P$ to $G$ which is a representation of $G$
\[I_P^G(\sigma)=Ind_P^G(\sigma):=\{\mbox{smooth }f:G\rightarrow V_\sigma|~f(nmg)=\delta_P(m)^{1/2}\sigma(m)f(g), \forall n\in N, m\in M~and~g\in G\} \]
with $\delta_P$ stands for the modulus character of $P$, i.e., denote by $\mathfrak{n}$ the Lie algebra of $N$,
\[\delta_P(nm)=|det~Ad_\mathfrak{n}(m)|_F, \]
and the normalized Jacquet module $J_M(\pi)$ with respect to $P$ which is a representation of $M$
\[\pi_N:=V_\pi/\left<\pi(n)e-e:~n\in N,e\in V_\pi\right>. \] 
Given an irreducible admissible representation $\sigma$ of $M$ and $\nu\in \mathfrak{a}^\star_{M}$, let $I(\nu,\sigma)$ be the representation of $G$ induced from $\sigma$ and $\nu$ as follows:
\[I(\nu,\sigma)=Ind_P^G(\sigma\otimes q^{\left<\nu,H_P(-)\right>}) .\]
We denote by $JH(I_P^G(\sigma))$ the set of Jordan--H\"{o}lder constituents of the parabolic induction $I^G_P(\sigma)$, and define the action of $w\in W_M$ on representations $\sigma$ of $M$ to be $w.\sigma=\sigma\circ Ad(w)^{-1}$ and $\sigma^w=\sigma\circ Ad(w)$.

\subsection{$R$-group}
In \cite{muller1979integrales}, for a principal series $I(\lambda)$ of $G$, she defines a subgroup $W_\lambda^1$ of the Weyl group $W$ governing the reducibility of the ``unitary'' part of principal series on the Levi level, which is indeed the Knapp--Stein $R$-group as follows (cf. \cite{winarsky1978reducibility,keys1982decomposition}), 
\begin{align*}
\Phi_{\lambda}^0&:=\{\alpha\in \Phi:~\lambda_\alpha=Id \},\\
W_{\lambda}^0&:=\left<w_\alpha:~\alpha\in \Phi_\lambda^0 \right>,\\
W^1_\lambda&:=\{w\in W_\lambda:~w.(\Phi_\lambda^0)^+>0 \},\\
W_\lambda&:=\{w\in W:~w.\lambda=\lambda \}.
\end{align*} 
In view of \cite[Lemma I.1.8]{waldspurger2003formule}, one has
\[W_\lambda=W_\lambda^0\rtimes W_\lambda^1. \]
Following the Knapp--Stein R-group theory (cf. \cite{silberger2015introduction}), we denote by $R_\lambda$ the subgroup $W_\lambda^1$.

Likewise, for generalized principal series $I^G_P(\sigma)$ (cf. \cite{luo2018muller}),
\begin{align*}
\Phi_{\sigma}^0&:=\{\alpha\in \Phi_M^0:~w_\alpha.\sigma=\sigma \},\\
W_{\sigma}^0&:=\left<w_\alpha:~\alpha\in \Phi_{\sigma}^0 \right>,\\
W^1_{\sigma}&:=\{w\in W_{\sigma}:~w.(\Phi_{\sigma}^0)^+>0 \},\\
W_{\sigma}&:=\{w\in W_M:~w.\sigma=\sigma \}.
\end{align*}
Via \cite[Lemma I.1.8]{waldspurger2003formule}, we have
\[W_{\sigma}=W_{\sigma}^0\rtimes W_{\sigma}^1, \]
and we denote $R_{\sigma}$ to be $W_{\sigma}^1$ following tradition, but it is not the exact Knapp--Stein $R$-group in the sense of Silberger.

\subsection{Special exponent} There is no harm to assume that $G$ is of compact center. For a discrete series representation $\pi$ of $G$, we denote its associated supercuspidal support to be $\sigma$ which is a supercuspidal representation of some Levi subgroup $M$ of $G$. Denote by $\omega_\sigma$ the unramified part of the central character of $\sigma$, i.e. $\omega_\sigma\in \mathfrak{a}_{M,\mathbb{C}}^\star$. Such a character is called a \underline{special exponent}.

\section{Universal Hierarchical Structure of Reducibility}
\subsection{A product formula}
In this subsection, we prove a key observation of the decomposition of parabolic induction which opens a gate to understand some of the classical results/conjectures, for example the generic irreducibility property of parabolic induction and Clozel's finiteness conjecture of special exponents.

Recall that $G$ is a connected reductive group defined over $F$ with the set of simple roots $\Delta$, $P=MN$ is a standard parabolic subgroup of $G$ associated to $\Theta_M\subset \Delta$ and $\sigma$ is a supercuspidal representation of $M$ (not necessary unitary), one forms a parabolic induction $I_P^G(\sigma)$. Then our ``product formula'' is designed to ask the following question
\[\mbox{When does the reducibility of $I^G_P(\sigma)$ only happen on the Levi-level?\tag*{$(\star)$}} \]
i.e.
\[\mbox{What is a reasonble condition for the irreducibility of $I^G_{Q=LV}(\tau)$ for all $\tau\in JH(I_{L\cap P}^L(\sigma))$?} \]
The answer traces back to a beautiful theorem of I. Muller \cite{muller1979integrales,kato1982irreducible} which provides a natural criterion of the irreducibility of principal series, and its generalized version for generalized principal series \cite{luo2018muller} using the Knapp--Stein $R$-group and co-rank one reducibility. As the irreducibility is governed by the Knapp--Stein $R$-group and the co-rank one reducibility, a natural candidate for ($\star$) is to assume that those governing conditions occur only on the Levi-level. To be more precise, let $Q=LV$ be a standard parabolic subgroup associated to $\Theta_L$ with $\Theta_M\subset\Theta_L\subset \Delta$, then our working assumption for $(\star)$ is as follows.

{\bf \underline{Working Hypothesis}}:
\leavevmode
\begin{enumerate}[(i)]
	\item (\underline{Rank-one reducibility}) The co-rank one reducibility only occurs within $L$, i.e.
	\[I_M^{M_\alpha}(\sigma) \mbox{ is reducible only for some }\alpha\in \Phi_M^{L},~i.e.~\alpha\in \Phi_M^{L,0}\mbox{ (cf. \cite[Theorem 7.1.4]{casselman1995introduction})}. \] 
	\item (\underline{$R$-group}) The $R$-group $R_\sigma$ associated to $\sigma$ is a subgroup of $W^L_M$, i.e.
	\[R_\sigma\subset W^L_M. \]
\end{enumerate}
Under those hypothesis, via the Jacquet module machine, the confirmation of $(\star)$ results from the associativity property of intertwining operators and the following observation/fact (cf. \cite{bernstein1977induced,casselman1995introduction,moeglin_waldspurger_1995,waldspurger2003formule}).
\begin{enumerate}[(i)]
	\item (Bernstein--Zelevinsky geometrical lemma)
	\[J_M( I_P^G(\sigma))=\sum\limits_{w\in W_M}\sigma^w. \]
	\item (Bruhat--Tits decomposition)
	\[W_M=W_M(L)W^L_M, \tag{$\star\star$}\]
	where $W_M(L)$ is defined as follows:
	\[W_M(L):=\{w\in W_M:~w.(\Phi_M^{L,0})^+>0 \}/W_M^{L,1}, \]
	here $W_M^{L,1}$ is defined as in \cite[Lemma 2.1]{luo2018muller}), i.e.
	\[W_M^{L,1}:=\{w\in W_M^L:w.(\Phi_M^{L,0})^+>0 \}. \]
\end{enumerate}
To be precise,
\begin{thm}(Universal hierarchical irreducibility criterion)\label{prod}
	Keep the notions as above. Under the above {\bf Working Hypothesis}, we have the following cardinality equality
	\[\#JH(I^G_P(\sigma))=\#JH(I_{L\cap P}^L(\sigma)). \]
	Indeed, this is equivalent to saying that 
	\[I^G_Q(\tau)\mbox{ is always irreducible for any }\tau \in JH(I^L_{L\cap P}(\sigma)). \]
\end{thm}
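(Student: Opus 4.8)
The plan is to exhibit, for each fixed $\tau\in JH(I^L_{L\cap P}(\sigma))$, a canonical isomorphism between $I^G_Q(\tau)$ and a single Jordan--H\"older constituent of $I^G_P(\sigma)$, so that the cardinality equality $\#JH(I^G_P(\sigma))=\#JH(I^L_{L\cap P}(\sigma))$ follows by transitivity of induction $I^G_P(\sigma)=I^G_Q(I^L_{L\cap P}(\sigma))$ together with exactness of parabolic induction. Since everything is ultimately controlled by the supercuspidal support $\sigma$ of $M$, I would first reduce to the study of $I^G_{P_0}(\rho)$-type objects, where $\rho$ here is just $\sigma$ supercuspidal on $M$, and use the Bernstein--Zelevinsky geometrical lemma in the form stated above, $J_M(I^G_P(\sigma))=\sum_{w\in W_M}\sigma^w$, to parametrize constituents by subsets of $W_M$. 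Concretely: the constituents of $I^G_P(\sigma)$ are in bijection with the blocks of a partition of $W_M$, where two elements lie in the same block when the corresponding standard intertwining operators become proportional; and the analogous statement holds for $I^L_{L\cap P}(\sigma)$ with $W_M$ replaced by $W^L_M$.

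The key step is the Bruhat--Tits-type decomposition $(\star\star)$, $W_M=W_M(L)W^L_M$. I would use this to write every $w\in W_M$ as $w=w'w_1$ with $w'$ a representative in $W_M(L)$ and $w_1\in W^L_M$, and then argue that the reducibility/gluing pattern of $I^G_P(\sigma)$ depends only on the $W^L_M$-coordinate $w_1$, not on the $W_M(L)$-coordinate $w'$. This is exactly where the \textbf{Working Hypothesis} enters: hypothesis (i) says that for any relative root $\alpha\in\Phi_M^0$ lying outside $\Phi_M^{L,0}$ the co-rank one induction $I_M^{M_\alpha}(\sigma)$ is irreducible, which by the Langlands--Shahidi / Harish-Chandra theory of rank-one intertwining operators forces the corresponding rank-one operators to be scalar-nonzero isomorphisms; hypothesis (ii) says $R_\sigma\subset W^L_M$, so that no ``extra'' gluing coming from the $R$-group occurs across the coset direction $W_M(L)$. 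Cocycle/associativity of intertwining operators $A(w'w_1)=A(w')\circ{}^{w_1}\!A(w_1)$ (up to the harmless meromorphic normalization, here regular since $\sigma$ is supercuspidal and the relevant $C$-functions are holomorphic away from the reducibility hyperplanes, which by (i) lie in $L$) then shows that $A(w')$ is an isomorphism $I^G_P(\sigma^{w_1})\xrightarrow{\sim}I^G_P(\sigma^{w'w_1})$ for every $w'\in W_M(L)$, which is precisely the asserted $I^G_P(\rho^{w_1})\simeq I^G_P(\rho^{w'w_1})$. Hence the partition of $W_M$ governing $JH(I^G_P(\sigma))$ is the pullback, under $W_M\twoheadrightarrow W_M(L)\backslash W_M\cong$ (something containing) $W^L_M$, of the partition of $W^L_M$ governing $JH(I^L_{L\cap P}(\sigma))$, giving the cardinality equality; the equivalence with ``$I^G_Q(\tau)$ always irreducible'' is then formal from exactness and transitivity of induction, since $\sum_{\tau}\#JH(I^G_Q(\tau))\ge \#JH(I^G_P(\sigma))$ with equality iff each $I^G_Q(\tau)$ is irreducible.

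The main obstacle I anticipate is the passage from ``rank-one operators are isomorphisms'' to ``the global operator $A(w')$ is an isomorphism'' when $\rho$ is \emph{not} regular, i.e.\ when $W_\sigma\cap W_M(L)$ is nontrivial: there one cannot simply multiply rank-one factors along a reduced decomposition of $w'$, because some factors correspond to roots in $\Phi_\sigma^0$ and the operators are genuinely non-scalar. The excerpt's introduction signals this by treating the regular case first, so I would handle the general case by a careful bookkeeping of the decomposition $W_\sigma=W_\sigma^0\rtimes R_\sigma$ with $R_\sigma\subset W^L_M$: choose the set $S$ of representatives of $W^L_M\backslash W_M$ (equivalently $W_M(L)$) so that each $w\in S$ satisfies $w.(\Phi_\sigma^0\cap\Phi_M^{L,0})^+>0$ and $w$ has minimal length in its coset, ensuring that a reduced expression for $w$ meets only roots $\alpha$ with $w_\alpha\notin W_\sigma$, i.e.\ roots along which the rank-one operator is a nonzero scalar by hypothesis (i). Verifying that such an $S$ exists and is closed under the relevant products — essentially a combinatorial lemma about $\Phi_M^0$, $\Phi_M^{L,0}$ and $\Phi_\sigma^0$, in the spirit of \cite[Lemma I.1.8]{waldspurger2003formule} — is the technical heart, and once it is in place the isomorphism $I^G_P(\sigma^{w_1})\simeq I^G_P(\sigma^{ww_1})$ for $w\in S$, $w_1\in W^L_M$ drops out and the theorem follows.
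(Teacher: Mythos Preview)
Your approach is essentially the paper's: use the decomposition $W_M=W_M(L)W^L_M$ and show that the intertwining operator $A(w,\sigma)$ is an isomorphism for every $w\in W_M(L)$, via the product formula $J_{P|P^w}(\sigma^w)J_{P^w|P}(\sigma)=\prod_{\alpha}j_\alpha(\sigma)$ over $\alpha\in\Phi_M(P)\cap\Phi_M(\overline{P^w})$, together with the defining condition $w.(\Phi_M^{L,0})^+>0$ which guarantees that no root from $\Phi_M^{L,0}$ appears in this product.

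The ``main obstacle'' you anticipate, however, is not a genuine obstacle, and your proposed workaround (a delicate choice of representatives $S$ whose reduced expressions avoid $\Phi_\sigma^0$) is unnecessary. Your claim that for $\alpha\in\Phi_\sigma^0$ the rank-one operators are ``genuinely non-scalar'' is incorrect in the situation at hand: if $\alpha\in\Phi_\sigma^0\cap(\Phi_M^0\setminus\Phi_M^{L,0})$, then hypothesis~(i) forces $I^{M_\alpha}_M(\sigma)$ to be \emph{irreducible}, so by Schur the (normalized) rank-one intertwining operator is a scalar isomorphism. In terms of $j_\alpha$ this is the statement that $j_\alpha(\sigma)$ has a pole of order~2 at such points and one simply takes the residue; the paper invokes \cite[Proposition~2]{savin2007} for this. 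For the remaining roots $\alpha\in\Phi_M(P)\setminus\Phi_M^0$ and $\alpha\in\Phi_M^0\setminus(\Phi_M^{L,0}\cup\Phi_\sigma^0)$ one has $j_\alpha(\sigma)\neq 0,\infty$ by \cite[Corollary~1.8]{silberger1980special}. Thus every factor $j_\alpha$ appearing in the product is harmless, uniformly in the regular and non-regular cases, and $A(w,\sigma)$ is an isomorphism without any combinatorial lemma about representatives.
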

Before turning to the proof, let us first prove the Bruhat-Tits decomposition, i.e. $(\star\star)$, which is a generalization of \cite[Lemma 1.1.2]{casselman1995introduction} for $M=T$, in what follows.
\begin{proof}[Proof of $(\star\star)$]
	First note that (cf. \cite[Lemma 2.1]{luo2018muller}) 
	\[W_M:=W_M^0\rtimes W_M^1 \]
	and 
	\[W_M^L:=W_M^{L,0}\rtimes W_M^{L,1}, \]
	with
	\[W_M^{1}:=\{w\in W_M:w.(\Phi_M^{0})^+>0 \}=\{w\in W_M:w.(\Phi_M^{0})^+=(\Phi_M^{0})^+ \}. \]
	So 
	\[W_M/W_M^L=\{w\in W_M:w.(\Phi_M^{L,0})^+>0 \}/W_M^{L,1}. \]
\end{proof}
\begin{proof}[Proof of Theorem \ref{prod}]
	Note that the decomposition of $I^G_P(\sigma)$ is a partition of $W_M$. Recall that $R_\sigma$ is in general not the exact R-group in the sense of Knapp--Stein, as it is defined by
	\[W_\sigma=W_\sigma^0\rtimes R_\sigma, \]
	where
	\[W_\sigma:=\{w\in W_M:~w.\sigma=\sigma \},\]
	and 
	\[W_\sigma^0:=\left<w_\alpha:~w_\alpha.\sigma=\sigma,~\alpha\in\Phi_M  \right>. \]
	But reducibility coming from $W_\sigma^0$ has been taken care of by the co-rank one reducibility condition which only occurs within $L$ by the assumption. On the other hand, the Knapp-- Stein $R$-group theory helps us control the multiplicity issue.
	
	Therefore it reduces to show that the non-zero intertwining operator $A(w,\sigma)$ associated to $w\in W_M(L)$ is an isomorphism, i.e.
	\[A(w,\sigma):~I_P^G(\sigma)\stackrel{\sim}{\longrightarrow} I_P^G(\sigma^w). \]
	Recall that $A(w,\sigma)$ is defined as follows:
	\[J_{P|P^w}(\sigma^w)\circ \lambda(w):~I^G_P(\sigma)\longrightarrow I^G_{P^w}(\sigma^w)\longrightarrow I^G_P(\sigma^w). \]
	Thus the above isomorphism claim follows from the associativity property of intertwining operators (cf.\cite[IV.3 (4)]{waldspurger2003formule} or \cite[Lemma 3.5]{luo2018R}), i.e.
	\[J_{P|P^w}(\sigma^w)J_{P^w|P}(\sigma)=\prod j_\alpha(\sigma)J_{P|P}(\sigma), \]
	where $\alpha$ runs over
	\[\Phi_M(P)\bigcap \Phi_M(\overline{P^w}) \]
	with $\overline{P^w}$ the opposite parabolic subgroup of $P^w$, and $\Phi_M(P)$ (resp. $\Phi_M(\overline{P^w})$) is the set of restricted roots of $M$ in $P$ (resp. $\overline{P^w}$).
	
	Note that for $\alpha\in \Phi_M(P)-\Phi_M^0$, the associated co-rank one induction is always irreducible and $j_\alpha(\sigma)\neq 0,\infty$ (cf. \cite[Corollary 1.8]{silberger1980special}). 
	
	Note also that for $\alpha\in \Phi_M^{0}-\Phi_M^{L,0}$, the associated co-rank one induction is always irreducible by our Working Hypothesis, which implies that either $j_\alpha(\sigma)\neq 0,\infty$ for non-unitary induction (cf. \cite[Corollary 1.8]{silberger1980special}), or $j_\alpha(\sigma)$ has a pole of order 2 (cf. \cite[Proposition 2]{savin2007}). For the latter case, one can take the residue to get an isomorphism. 
	
	Therefore one only needs to consider those $j_\alpha(\sigma)$ with $\alpha\in \Phi_M^{L,0}$. 
	
	For those $\alpha\in \Phi_M^{L,0}$, we have 
	\[w.(\Phi_M^{L,0})^+>0, \]
	so
	\[\Phi_M(P)\bigcap \Phi_M(\overline{P^w})\bigcap \Phi_M^{L,0}=\emptyset.  \]
	Thus $A(w,\sigma)$ is an isomorphism.
\end{proof}
It seems that our proof of Theorem \ref{prod} is quite restrictive. But it also seems that this may be the only universal argument we could come up with nowadays which has since been practiced intelligently by Bernstein--Zelevinsky, Silberger, Jantzen, Moeglin, the Tadi{\'c} school etc (please refer to \cite{bernstein1977induced,jantzen1997supports,moeglin2002construction,tadic1998regular,lapid2017some} for a glimpse). Inspired by a conjectural Muller type irreducibility criterion for general parabolic inductions (see \cite[Conjecture 4.4]{luo2018muller}), a detailed study of the structure of Jacquet modules of a general parabolic induction may give us hope to prove the following conjectural universal irreducibility structure:
\begin{conj}\label{conj}
	Given a parabolic induction $I^G_{P=MN}(\sigma)$ with $\sigma$ an irreducible admissible representation of $M$, if the reducibility conditions of $I^G_P(\sigma)$ lie in some standard Levi subgroup $L$ of a parabolic subgroup $Q=LV\subset P=MN$, then $I^G_Q(\tau)$ is always irreducible for any $\tau\in JH(I^L_{L\cap P}(\sigma))$.
\end{conj}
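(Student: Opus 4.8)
The plan is to run the argument of Theorem~\ref{prod} at the level of the supercuspidal support. Write $\rho$ for the supercuspidal support of $\sigma$, realized on a standard parabolic $P_0=M_0N_0$ with $M_0\subset M$, so that $\sigma\in JH(I^M_{M\cap P_0}(\rho))$ and, by transitivity of parabolic induction, every $\tau\in JH(I^L_{L\cap P}(\sigma))$ lies in $JH(I^L_{L\cap P_0}(\rho))$ while $JH(I^G_Q(\tau))\subset JH(I^G_{P_0}(\rho))$. The first genuine task is to make the hypothesis precise: using the conjectural Muller-type criterion \cite[Conjecture~4.4]{luo2018muller}, one should read ``the reducibility conditions of $I^G_P(\sigma)$ lie in $L$'' as the pair of statements that the $R$-group $R_\rho$ of $\rho$ is contained in $W_{M_0}^L$, and that every reducible rank-one induction attached to a twist $\rho^w$, $w\in W_{M_0}$, comes from a root in $\Phi_{M_0}^{L,0}$. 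After this translation one is exactly in the setting of the \textbf{Working Hypothesis} above, with $(\rho,M_0)$ in place of $(\sigma,M)$.

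First I would lay down the combinatorial skeleton as in the supercuspidal case. By the Bernstein--Zelevinsky geometrical lemma \cite{bernstein1977induced} and exactness of the Jacquet functor, the Jordan--H\"older constituents of $I^G_{P_0}(\rho)$ are organized by a $W_{M_0}$-indexed filtration of $J_{M_0}(I^G_{P_0}(\rho))$ refining a partition of $W_{M_0}$, and similarly $I^L_{L\cap P_0}(\rho)$ gives a partition of $W_{M_0}^L$. By the Bruhat--Tits decomposition $(\star\star)$ one has $W_{M_0}=W_{M_0}(L)\,W_{M_0}^L$, and --- this is the novelty already used for regular $\rho$ in the introduction --- I would pick a good set $S\subset W_{M_0}(L)$ of representatives of $W_{M_0}^L\backslash W_{M_0}$. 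The goal is then to establish
\[
I^G_{P_0}(\rho^{w_1})\;\stackrel{\sim}{\longrightarrow}\;I^G_{P_0}(\rho^{w_1w})\qquad\text{for all }w_1\in W_{M_0}^L,\ w\in S,
\]
realized by the standard intertwining operator $A(w,\rho^{w_1})$; granting this, the isomorphisms coming from $S$ identify the blocks indexed by the cosets of $W_{M_0}^L$, and a count of Jordan--H\"older multiplicities forces $\#JH(I^G_P(\sigma))=\#JH(I^L_{L\cap P}(\sigma))$, i.e.\ each $I^G_Q(\tau)$ is irreducible.

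The heart of the matter is the isomorphism claim, and here I would argue as in the proof of Theorem~\ref{prod}: factor $A(w,\rho^{w_1})$ through rank-one operators and invoke the associativity formula \cite[IV.3]{waldspurger2003formule}
\[
J_{P_0|P_0^w}(\rho^{w_1w})\,J_{P_0^w|P_0}(\rho^{w_1})\;=\;\Bigl(\textstyle\prod_\alpha j_\alpha(\rho^{w_1})\Bigr)\,J_{P_0|P_0}(\rho^{w_1}),
\]
the product running over $\alpha\in\Phi_{M_0}(P_0)\cap\Phi_{M_0}(\overline{P_0^w})$. One then checks root by root: for $\alpha\notin\Phi_{M_0}^0$ the rank-one induction is irreducible and $j_\alpha(\rho^{w_1})\neq0,\infty$ by \cite[Corollary~1.8]{silberger1980special}; for $\alpha\in\Phi_{M_0}^0\setminus\Phi_{M_0}^{L,0}$ the rank-one induction is again irreducible by the translated hypothesis, so either $j_\alpha(\rho^{w_1})\neq0,\infty$ or $j_\alpha(\rho^{w_1})$ has a pole of order two (cf.\ \cite[Proposition~2]{savin2007}), from which one extracts an isomorphism by passing to a residue; and the choice $S\subset W_{M_0}(L)$ forces $\Phi_{M_0}(P_0)\cap\Phi_{M_0}(\overline{P_0^w})\cap\Phi_{M_0}^{L,0}=\emptyset$, so no bad roots survive.

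The main obstacle --- and the reason this remains conjectural --- is that for $\sigma$ not supercuspidal the Jacquet module $J_{M_0}(I^G_{P_0}(\rho))$ is no longer a genuine direct sum of the twists $\rho^w$, so the bookkeeping of Jordan--H\"older multiplicities through the $W_{M_0}$-filtration is delicate: one must control how a fixed $\tau\in JH(I^L_{L\cap P}(\sigma))$ sits inside $I^L_{L\cap P_0}(\rho)$, how the isomorphisms associated with $S$ act on the constituents $\tau$ rather than on $\rho$ itself, and that the resulting identification of blocks occurs without collision or extra multiplicity. A second, essential gap is that \cite[Conjecture~4.4]{luo2018muller} is itself open, yet is needed both to formulate ``the reducibility conditions lie in $L$'' rigorously and to know that $R_\rho$ together with the rank-one reducibility locus really exhaust the sources of reducibility. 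Finally, as already visible in the $R$-group splitting hypothesis of Conjecture~\ref{gjconj}, one may in addition need $R_\rho$ to decompose compatibly with the $W_{M_0}(L)/W_{M_0}^L$ factorization in order to choose $S$ annihilating all the rank-one factors; absent such compatibility this would have to be imposed or proved separately.
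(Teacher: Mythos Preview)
The statement you are addressing is labeled a \emph{Conjecture} in the paper and is not given a proof there; the paper explicitly presents it as an open problem, motivated by the supercuspidal case (Theorem~\ref{prod}) and by the conjectural Muller-type criterion \cite[Conjecture~4.4]{luo2018muller}. The only hint the paper offers toward an argument is the sentence immediately preceding the conjecture: ``a detailed study of the structure of Jacquet modules of a general parabolic induction may give us hope to prove the following conjectural universal irreducibility structure.''

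Your proposal is therefore not competing against any proof in the paper but rather articulating a strategy for an open problem, and the route you outline --- pass to the supercuspidal support $\rho$ of $\sigma$ on $P_0=M_0N_0$, rerun the Theorem~\ref{prod} argument at the $(M_0,\rho)$ level using the Bruhat--Tits decomposition $(\star\star)$ and the associativity of intertwining operators, then control the Jacquet-module bookkeeping --- is exactly the direction the paper gestures toward. You have also correctly isolated the genuine obstructions: first, \cite[Conjecture~4.4]{luo2018muller} is itself open and is needed even to make ``the reducibility conditions lie in $L$'' precise for non-supercuspidal $\sigma$; second, $J_{M_0}(I^G_{P_0}(\rho))$ is only filtered, not semisimple, so tracking how a fixed $\tau\in JH(I^L_{L\cap P}(\sigma))$ moves under the isomorphisms indexed by $S$ is delicate; third, a compatibility of $R_\rho$ with the $W_{M_0}^L\backslash W_{M_0}$ factorization may have to be imposed. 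These are precisely the reasons the statement remains conjectural; your write-up is an accurate status report and a reasonable plan of attack, not a proof, and should be presented as such.
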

\begin{rem}
	At first glance, Conjecture \ref{conj} may look meaningless and ridiculous. But some supportive examples of a conjectural Muller type irreducibility criterion for general parabolic induction, i.e. \cite[Conjecture 4.4]{luo2018muller} could guide us to a right direction. Those examples are parabolic induction representations inducing from essentially discrete series and standard modules which many mathematicians have investigated since the era of Harish-Chandra.
\end{rem}
Denote by $\Theta_Q$ the associated subset of $\Delta$ which determines the parabolic subgroup $Q=LV\supset P=MN$ of $G$. Explicitly, we decompose $\Theta_L=\Theta_1\sqcup\cdots \sqcup \Theta_t $ into irreducible pieces, and accordingly $\Theta_M=\Theta_1^M\sqcup \cdots \sqcup \Theta_t^M$. Assume that $R_\sigma$ decomposes into $R_\sigma=R_1\times \cdots \times R_t$ with respect to the decomposition of $\Theta_L$, and a similar decomposition pattern holds for the co-rank one reducibility, i.e. co-rank one reducibility only occurs within $P_{\Theta_i}=M_{\Theta_i}N_{\Theta_i}$ for $1\leq i\leq t$. Then we have  
\begin{cor}[Product formula]\label{prod1}
	\[\#(JH(I^G_P(\sigma)))=\prod_{i=1}^{t}\#(JH(I_{M_{\Theta_i^M}}^{M_{\Theta_i}}(\sigma))).\]
\end{cor}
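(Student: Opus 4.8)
The plan is to iterate Theorem \ref{prod} along a chain of parabolic subgroups refining $Q=LV$ into the individual blocks $P_{\Theta_i}$. First I would set up a flag of standard parabolic subgroups interpolating between $P=MN$ and $Q=LV$: for $0\le j\le t$ put $\Theta^{(j)}:=\Theta_1\sqcup\cdots\sqcup\Theta_j\sqcup\Theta_{j+1}^M\sqcup\cdots\sqcup\Theta_t^M$, so that $\Theta^{(0)}=\Theta_M$ recovers $P$, $\Theta^{(t)}=\Theta_L$ recovers $Q$, and each step $\Theta^{(j-1)}\subset\Theta^{(j)}$ enlarges only the $i=j$ block from $\Theta_j^M$ to $\Theta_j$. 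Denote the corresponding Levi subgroups by $L^{(j)}$ and the parabolics by $Q^{(j)}=L^{(j)}V^{(j)}$.

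Next I would verify that the Working Hypothesis of Theorem \ref{prod} applies to the pair $(P, Q^{(j)})$ for each $j$. This is exactly where the decomposition assumptions in the statement are used: by hypothesis the co-rank one reducibility of $I_M^{M_\alpha}(\sigma)$ occurs only for $\alpha\in\Phi_M^{\Theta_i,0}$ for some $i$, hence a fortiori only for $\alpha\in\Phi_M^{\Theta^{(j)},0}$ once $j\ge i$; and $R_\sigma=R_1\times\cdots\times R_t$ with $R_i\subset W^{M_{\Theta_i}}_M\subset W^{L^{(j)}}_M$ whenever $j\ge i$, so $R_\sigma\subset W^{L^{(j)}}_M$ fails in general — but we do not need it for all $j$ at once. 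The correct bookkeeping is to apply Theorem \ref{prod} not to $(P,Q^{(j)})$ globally but block by block: inside $L^{(j)}=L^{(j-1)}$ enlarged at block $j$, the inductive step $I^{L^{(j)}}_{L^{(j-1)}\cap P}(\tau)$ factors through the single block $M_{\Theta_j}$, where the relevant $R$-group is $R_j\subset W^{M_{\Theta_j}}_{M_{\Theta_j^M}}$ and the co-rank one reducibility is confined to $\Phi_{M_{\Theta_j^M}}^{\Theta_j,0}$ by assumption. Thus Theorem \ref{prod}, applied with ambient group $M_{\Theta_j}$, Levi $M_{\Theta_j^M}$, and target Levi $M_{\Theta_j}$ itself, gives
\[
\#JH\bigl(I^{M_{\Theta_j}}_{M_{\Theta_j^M}}(\sigma)\bigr)=\#JH\bigl(I^{M_{\Theta_j}}_{M_{\Theta_j}}(\sigma)\bigr)\cdot(\text{correction})
\]
— more precisely, applying Theorem \ref{prod} with $L=L^{(j)}$ relative to $G$ and using that the factors of $W_M$ and of $\Phi_M^0$ attached to distinct blocks commute, one gets $\#JH(I^G_{Q^{(j-1)}}(\sigma'))=\#JH(I^G_{Q^{(j)}}(\sigma''))$-type cardinality identities that telescope.

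Then I would run the telescoping argument: since the block data split, $\#JH(I^{L}_{L\cap P}(\sigma))=\prod_{i=1}^t\#JH(I^{M_{\Theta_i}}_{M_{\Theta_i^M}}(\sigma))$ by the standard fact that parabolic induction from a Levi that is a direct product of the $M_{\Theta_i}$ over $M_{\Theta_i^M}$ decomposes as the external tensor product, so its Jordan--Hölder constituents are products of those of the factors (here one uses that $R_\sigma$ and the co-rank one reducibility both split along $\Theta_L=\sqcup\Theta_i$, so there is no ``interaction'' reducibility between blocks). Combining this with Theorem \ref{prod} applied to the pair $(P,Q)$ itself — whose Working Hypothesis holds because $R_\sigma\subset W^L_M$ and the co-rank one reducibility lies in $\bigsqcup_i\Phi_{M}^{\Theta_i,0}\subset\Phi_M^{L,0}$ — yields $\#JH(I^G_P(\sigma))=\#JH(I^L_{L\cap P}(\sigma))=\prod_{i=1}^t\#JH(I^{M_{\Theta_i}}_{M_{\Theta_i^M}}(\sigma))$, which is the claim.

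The main obstacle I anticipate is the block-factorization step for $\#JH(I^L_{L\cap P}(\sigma))$: one must be careful that the relative Weyl group $W^L_M$ genuinely decomposes as $\prod_i W^{M_{\Theta_i}}_{M_{\Theta_i^M}}$ and that the partition of $W^L_M$ governing the constituents (via the Bernstein--Zelevinsky geometrical lemma and the Knapp--Stein $R$-group $R_\sigma=\prod_i R_i$) is the product of the block partitions — i.e.\ that no constituent of the full induction fails to be a pure tensor. This is where the hypothesis ``a similar decomposition pattern holds for the co-rank one reducibility'' does the real work, and making it rigorous amounts to checking that the intertwining operators $A(w,\sigma)$ for $w\in W^L_M$ factor as tensor products of the operators for the projections of $w$ to each $W^{M_{\Theta_i}}_{M_{\Theta_i^M}}$, which follows from the associativity/multiplicativity of the $j_\alpha(\sigma)$ used in the proof of Theorem \ref{prod} together with the fact that roots from distinct blocks $\Theta_i$ are mutually orthogonal. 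Everything else is formal telescoping.
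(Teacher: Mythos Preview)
Your proposal is correct, and the final route you land on---apply Theorem \ref{prod} once to the pair $(P,Q)$ to get $\#JH(I^G_P(\sigma))=\#JH(I^L_{L\cap P}(\sigma))$, then factor the right-hand side using the block decomposition $\Theta_L=\sqcup_i\Theta_i$ of $L$---is exactly what the paper intends: the corollary is stated without proof, as an immediate consequence of Theorem \ref{prod} together with the product structure of $L$ along its irreducible pieces. Your opening telescoping flag $\Theta^{(j)}$ is an unnecessary detour (as you yourself notice when the hypothesis $R_\sigma\subset W_M^{L^{(j)}}$ fails for intermediate $j$); you can delete that portion and keep only the direct argument in your last two paragraphs, which is clean and matches the paper's implicit reasoning.
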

\begin{rem}
	Very recently, we learned that Jantzen has a beautiful product formula for split $Sp_{2n}$, $SO_{2n+1}$ and $O_{2n}$ which in some sense originates from Goldberg's product formula for tempered inductions (cf. \cite{jantzen1997supports,jantzen2005duality,goldberg1994reducibility}). Note that our new $R$-group is always trivial for those groups in Goldberg and Jantzen's works. Inspired by their beautiful theorems, we would like to investigate what kind of general product formula we could prove under our new notion of $R$-group in our future work.
\end{rem}
As an instance, one version of Goldberg--Jantzen type product formula is as follows. Consider the tempered generalized principal series $I^G_P(\sigma)$ with $\sigma$ unitary supercuspidal representation of $M$, we know that $\Phi_\sigma:=\{\alpha\in\Phi_M:~w_\alpha.\sigma=\sigma \}$ is a root system, may be reducible. Decomposing $\Phi_\sigma$ into irreducible pieces, i.e. $\Phi_\sigma=\sqcup_i\Phi_{\sigma,i}$. Each irreducible root system $\Phi_{\sigma,i}$ gives rise to a Levi subgroup $M_i\supset M$ which is defined by
\[M_i:=C_G((\bigcap\limits_{\alpha\in\Phi_{\sigma,i}}Ker(\alpha) )^0). \]
Assume that our new $R$-group $R_\sigma$ associated to $\sigma$ decomposes into a product of small pieces in the same pattern as does $\Phi_\sigma$, i.e.
\[R_\sigma=\prod_i R_{\sigma,i} \]
with $R_{\sigma,i}$ a subgroup of the relative Weyl group $W^{M_i}_M=N_{M_i}(M)/M$ of $M$ in $M_i$, then we have
\begin{conj}(Goldberg--Jantzen type product formula)\label{gjconj}
	Keep the notions as above. For $\nu\in \mathfrak{a}_M^*$, there is a one-one correspondence:
	\[JH(I^G_P(\nu,\sigma))\leftrightarrows \prod_i JH(I^{M_i}_{M_i\cap P}(\nu,\sigma)). \]
\end{conj}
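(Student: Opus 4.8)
\medskip
\noindent\emph{A strategy toward Conjecture~\ref{gjconj}.}\quad The plan is to run the Jacquet-module-plus-intertwining-operator argument used for Theorem~\ref{prod}, but relativized to the family $\{M_i\}$ attached to the irreducible components $\Phi_{\sigma,i}$ of $\Phi_\sigma$ instead of to a single Levi subgroup $L$. Since $JH(I^G_P(\nu,\sigma))$ is unchanged when $(\nu,\sigma)$ is replaced by a $W_M$-conjugate, I would first reduce to $\nu$ dominant, so that $I^G_P(\nu,\sigma)$ is a standard module whose Jordan--H\"{o}lder constituents are parametrized by a partition of $W_M$ --- the one produced by the Bernstein--Zelevinsky geometrical lemma $J_M(I^G_P(\sigma\otimes\nu))=\sum_{w\in W_M}w.(\sigma\otimes\nu)$ together with the Knapp--Stein/Harish-Chandra description of the commuting algebra. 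The goal is to show this partition is the ``external product'' over $i$ of the partitions of $W^{M_i}_M$ indexing $JH(I^{M_i}_{M_i\cap P}(\nu,\sigma))$, and then to upgrade that into the asserted bijection of Jordan--H\"{o}lder sets.

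First I would localize all reducibility on $\Phi_\sigma$: since $\sigma$ is unitary and $\nu$ is real, for $\alpha\in\Phi_M(P)\smallsetminus\Phi_\sigma$ one has $w_\alpha.(\sigma\otimes\nu)\not\cong\sigma\otimes\nu$, so the rank-one factor $j_\alpha(\sigma\otimes\nu)$ is finite and nonzero (cf.\ \cite[Corollary~1.8]{silberger1980special}); exactly as in the proof of Theorem~\ref{prod}, any composite of intertwining operators built only from such $j_\alpha$'s is an isomorphism, hence all reducibility is supported on $\Phi_\sigma$. Because $\Phi_\sigma=\sqcup_i\Phi_{\sigma,i}$ with mutually orthogonal components, each $\Phi_{\sigma,i}$ contained in the relative root system of $M$ in $M_i$, the reflection subgroup splits as $W_\sigma^0=\prod_iW_{\sigma,i}^0$ with $W_{\sigma,i}^0=\langle w_\alpha:\alpha\in\Phi_{\sigma,i}\rangle\subset W^{M_i}_M$; combined with the hypothesis $R_\sigma=\prod_iR_{\sigma,i}$ and the orthogonality of the components, one checks that the whole stabilizer factors through the $M_i$, namely $W_\sigma=\prod_i\bigl(W_{\sigma,i}^0\rtimes R_{\sigma,i}\bigr)$.

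Next I would choose a coherent system of representatives through a Bruhat--Tits type decomposition analogous to $(\star\star)$,
\[W_M=W_M(\{M_i\})\cdot\prod_iW^{M_i}_M,\qquad W_M(\{M_i\}):=\bigl\{w\in W_M:w.(\Phi_\sigma)^+>0\bigr\}\big/\prod_iW^{M_i,1}_M,\]
and show that for $w$ representing a class in $W_M(\{M_i\})$ one has $\Phi_M(P)\cap\Phi_M(\overline{P^w})\cap\Phi_\sigma=\emptyset$, whence, by the associativity of intertwining operators (cf.\ \cite[IV.3~(4)]{waldspurger2003formule}, \cite[Lemma~3.5]{luo2018R}), $A(w,\sigma\otimes\nu)$ is an isomorphism $I^G_P(\nu,\sigma)\xrightarrow{\ \sim\ }I^G_P(\nu,\sigma^w)$ --- taking residues when some $j_\alpha$ has a double pole, as in Theorem~\ref{prod}. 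This matches the partition of $W_M$ indexing $JH(I^G_P(\nu,\sigma))$ with the external product of the partitions of the $W^{M_i}_M$. Finally, unwinding the Knapp--Stein description, a constituent of $I^G_P(\nu,\sigma)$ labelled by an irreducible representation $\chi$ of the commuting algebra of $R_\sigma\cong\prod_iR_{\sigma,i}$, together with the (now rank-one, hence $M_i$-local) co-rank-one reducibility data, corresponds to the tuple of constituents of the $I^{M_i}_{M_i\cap P}(\nu,\sigma)$ labelled by the factors $\chi_i$; this produces $JH(I^G_P(\nu,\sigma))\leftrightarrows\prod_iJH(I^{M_i}_{M_i\cap P}(\nu,\sigma))$.

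The main obstacle is this last step: identifying the commuting algebras compatibly with the product. In general the Knapp--Stein commuting algebra of $I^G_P(\nu,\sigma)$ is $\mathbb{C}[R_\sigma]$ only up to a $2$-cocycle $\eta_\sigma\in H^2(R_\sigma,\mathbb{C}^\times)$, and the bijection requires $\eta_\sigma$ to be the external product of the cocycles $\eta_{\sigma,i}$ of the $M_i$-pieces, so that $\chi\mapsto(\chi_i)_i$ carries $\eta_\sigma$-projective irreducibles to tuples of $\eta_{\sigma,i}$-projective irreducibles. For quasi-split classical groups these cocycles are trivial by the work of Goldberg and Keys (cf.\ \cite{goldberg1994reducibility,keys1982decomposition}), which is exactly why the Goldberg--Jantzen formula holds there; in general the triviality or multiplicativity of $\eta_\sigma$ would have to be added as a hypothesis or established directly. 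A secondary difficulty is that for $\nu\neq0$ we are off the tempered locus, so the Knapp--Stein description must be replaced by the R-group theory for standard modules (Ban--Jantzen), and one has to check that the reduction to dominant $\nu$, the residue arguments, and the bookkeeping on the walls of $\mathfrak{a}_M^*$ still produce honest isomorphisms rather than merely subquotient relations; it is also here that the hypothesis $R_\sigma=\prod_iR_{\sigma,i}$ does the real work of excluding cross-$M_i$ reducibility, and its compatibility with the chosen representatives in $W_M(\{M_i\})$ must be verified.
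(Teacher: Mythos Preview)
The statement you are addressing is a \emph{conjecture} in the paper, not a theorem: the paper offers no proof whatsoever of Conjecture~\ref{gjconj}. It is stated as an open problem, motivated by Goldberg's and Jantzen's product formulas for classical groups and by the hope that the new $R$-group notion should govern a similar decomposition in general. So there is no ``paper's own proof'' to compare against.

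That said, your proposal is not a claimed proof either --- you explicitly frame it as a strategy and honestly flag the two principal obstructions (the $2$-cocycle compatibility for the commuting algebra, and the passage from the tempered to the non-tempered locus). This is the right posture. Your outline is the natural extrapolation of the method behind Theorem~\ref{prod}: localize reducibility to $\Phi_\sigma$, factor $W_\sigma$ and $R_\sigma$ over the irreducible pieces, produce a Bruhat--Tits type transversal on which the intertwining operators are isomorphisms, and then match partitions. This is consonant with the spirit in which the paper motivates the conjecture, and your identification of the cocycle issue is exactly the sort of obstruction the paper is silent about.

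One point to be careful with: your decomposition $W_M=W_M(\{M_i\})\cdot\prod_iW^{M_i}_M$ is not obviously well-posed, since the $M_i$ are not nested Levi subgroups but a family of overlapping ones (all containing $M$), and $\prod_iW^{M_i}_M$ need not sit inside $W_M$ as a subgroup in any canonical way. The paper's $(\star\star)$ is a single-Levi statement; promoting it to a multi-$M_i$ version would itself require an argument, likely using the orthogonality of the $\Phi_{\sigma,i}$ to show that the reflection subgroups commute inside $W_M$. This is a genuine gap in the strategy, not merely a bookkeeping issue.
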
 

\subsection{Generic irreducibility property of parabolic induction}
In this subsection, we provide a new simple proof of the generic irreducibility property of parabolic inductions which plays an essential role in Harish-Chandra's Plancherel formula \cite[IV.3]{waldspurger2003formule}.

Given an irreducible smooth representation $\sigma$ of the Levi subgroup $M$ of a parabolic subgroup $P=MN$ in reductive group $G$, we form a family of normalized parabolic induction representations $I(\nu,\sigma)=Ind^G_P(\sigma\otimes \nu)$ of $G$, where $\nu$ varies in $\mathfrak{a}_{M,\mathbb{C}}^\star$. The generic irreducibility property says that 
\begin{thm}(Generic Irreducibility Theorem cf. \cite{sauvageot})
	Keep the notions as above, we have
	\[Irred_\sigma:=\{\nu\in \mathfrak{a}_{M,\mathbb{C}}^\star:~I(\nu,\sigma) \mbox{ is irreducible} \}\mbox{ is a non-trivial Zariski open subset.} \]
\end{thm}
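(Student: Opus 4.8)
The plan is to reduce the statement to the analytic behavior of the standard intertwining operators and apply the product-formula philosophy of Theorem \ref{prod} fiberwise over $\mathfrak{a}_{M,\mathbb{C}}^\star$. First I would recall that $I(\nu,\sigma)$ and $I(\nu,\sigma^{w})$ have the same semisimplification for $w\in W_M$ fixing the inertial class, so reducibility of $I(\nu,\sigma)$ is detected — via the Langlands–Knapp–Stein machinery — by the poles and zeros of the rank-one Plancherel factors $j_\alpha(\nu,\sigma)$ together with the self-intertwining operators giving the $R$-group. Each $j_\alpha(\nu,\sigma)$ is a rational function in the variables $q^{\langle\nu,\alpha^\vee\rangle}$; its vanishing or polar locus is therefore a finite union of proper ``affine-toric'' hypersurfaces in $\mathfrak{a}_{M,\mathbb{C}}^\star$. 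The complement of this finite union is a non-empty Zariski-open set $\Omega$, and for $\nu\in\Omega$ every $j_\alpha(\nu,\sigma)$ is finite and non-zero, so every standard intertwining operator $A(w,\sigma\otimes\nu)$ is holomorphic and invertible by the associativity identity $J_{P|P^w}J_{P^w|P}=\prod_\alpha j_\alpha\cdot \mathrm{Id}$ used in the proof of Theorem \ref{prod}.

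\medskip
Next I would handle the $R$-group contribution. For $\nu$ outside a further finite union of hypersurfaces, the stabilizer $W_{\sigma\otimes\nu}$ is as small as possible — generically trivial once we also delete the walls $\langle\nu,\alpha^\vee\rangle\in \frac{2\pi i}{\log q}\mathbb{Z}$ forced on $W_{\sigma\otimes\nu}$ to be nontrivial for $w\notin W_\sigma$, and on the residual locus where $w\in W_\sigma$ the operator $A(w,\sigma\otimes\nu)$ is a scalar times invertible by the same rank-one analysis. Intersecting with $\Omega$ we obtain a non-empty Zariski-open $\Omega'$ on which $W_{\sigma\otimes\nu}$ is trivial and all intertwining operators are isomorphisms; then by the standard criterion $\mathrm{End}_G(I(\nu,\sigma))\cong \mathbb{C}[W_{\sigma\otimes\nu}]=\mathbb{C}$, so $I(\nu,\sigma)$ is irreducible. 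This is exactly the ``Levi-level'' mechanism of Theorem \ref{prod}, now applied with $L=M$: generically the reducibility conditions live in $M$ itself, hence do not occur at all.

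\medskip
The non-emptiness of $\Omega'$ is the one point that needs a genuine argument rather than a formal one: a priori the finitely many hypersurfaces could conceivably exhaust a \emph{real} subspace, but since each is the zero set of a nonzero polynomial in the $q^{\langle\nu,\alpha^\vee\rangle}$ and $\mathfrak{a}_{M,\mathbb{C}}^\star$ is irreducible as a variety, their union is a proper closed subset, so $\Omega'\neq\emptyset$; Zariski-openness is then immediate. \textbf{The hard part} will be the bookkeeping of the rank-one factors in the non-unitary regime — one must know for each $\alpha$ that $j_\alpha(\nu,\sigma)$ is genuinely a nonzero rational function (not identically $0$ or $\infty$), which is precisely the content of \cite[Corollary 1.8]{silberger1980special} invoked in the proof of Theorem \ref{prod}, together with the dichotomy (finite-nonzero vs. order-two pole with invertible residue) coming from \cite[Proposition 2]{savin2007}; once these inputs are organized, everything else is the formal ``one nonzero polynomial cuts out a proper subvariety'' step. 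An alternative, cleaner packaging would be to invoke directly that $\dim_{\mathbb{C}}\mathrm{End}_G(I(\nu,\sigma))$ is upper semicontinuous in $\nu$ and equals $1$ at any point where $\sigma\otimes\nu$ has trivial stabilizer and no rank-one reducibility — which certainly holds for $\nu$ in a suitable real cone where $\langle\nu,\alpha^\vee\rangle$ is large — so the locus where it equals $1$ is Zariski-open and non-empty.
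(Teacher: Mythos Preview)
Your argument is essentially the paper's own proof, fleshed out, but you have skipped its very first (and indispensable) sentence: the reduction, via the Langlands classification / supercuspidal support, to the case where $\sigma$ is \emph{supercuspidal}. Everything you invoke after that point --- the Muller-type criterion that reducibility of $I(\nu,\sigma)$ is completely detected by the rank-one factors $j_\alpha$ together with the $R$-group, the citation of Silberger's \cite{silberger1980special} for $j_\alpha\neq 0,\infty$, the dichotomy from \cite{savin2007}, the very statement of Theorem~\ref{prod} --- is only established in the paper (and in \cite{luo2018muller}) for supercuspidal inducing data. For general irreducible $\sigma$ this detection principle is precisely the open Conjecture~4.4 of \cite{luo2018muller} alluded to just after Theorem~\ref{prod}. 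So without that reduction your claim ``reducibility of $I(\nu,\sigma)$ is detected \ldots\ by the poles and zeros of the rank-one Plancherel factors $j_\alpha(\nu,\sigma)$ together with the $R$-group'' is an unproved assertion, not a recollection.

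Once you insert that reduction (pass to the supercuspidal support $\rho$ of $\sigma$ on a smaller Levi $M_0$, note that $I_P^G(\sigma\otimes\nu)$ is a constituent of $I_{P_0}^G(\rho\otimes\nu)$, and that irreducibility of the latter forces equality and hence irreducibility of the former), your argument matches the paper's: Silberger's uniqueness of the co-rank-one reducibility point gives finitely many hyperplanes, and $R_{\sigma\otimes\nu}\subset W_{\sigma\otimes\nu}$ is generically trivial. Your write-up is more explicit about the Zariski-topology bookkeeping than the paper's two-line sketch, which is fine. One small warning about your ``alternative, cleaner packaging'': the implication $\dim_{\mathbb{C}}\mathrm{End}_G(I(\nu,\sigma))=1 \Rightarrow I(\nu,\sigma)$ irreducible is false in a non-semisimple category (a nonsplit length-two extension has one-dimensional endomorphisms), so that route also needs the supercuspidal hypothesis, where the Jacquet-module count in the regular case really does force irreducibility, not merely indecomposability.
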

\begin{proof}
	In view of the Langlands classification theorem, it reduces to the case where $\pi$ is supercuspidal. Based on Theorem \ref{prod} (or Muller type irreducibility criterion \cite{luo2018muller}), the reducibility conditions are controlled by co-rank one reducibility and $R$-group. As there exists a unique reducibility point for the co-rank one case (cf. \cite[Lemma 1.2 \& 1.3]{silberger1980special}), thus it reduces to consider the $R$-group condition. Note that the $R$-group is a subgroup of $W_{\sigma_\nu}:=\{w\in W_M:~w.(\sigma\otimes\nu)=\sigma\otimes \nu \}$, whence the non-trivial set $Irred_\sigma$ is Zariski open.  
\end{proof}  
\section{Clozel's Finiteness Conjecture Of Special Exponents}
In this section, let us start with quoting Clozel's remark on Clozel's finiteness conjecture of special exponents in his second paper on Howe's finiteness conjecture \cite[P. 3]{clozel1989orbital} as follows:

\textit{``We would like to finish the introduction with the remark that the stronger conjecture still retains some interest, although we do not know any obvious application. Here the analogy with the theory of automorphic forms is interesting. In the automorphic case, the analogue of the finiteness assumption about exponents would be the fact that the poles of Eisenstein series constructed from cusp forms on a given parabolic subgroup lie in a fixed, finite set independent of the inducing cusp form. This is a very strong conjecture, unknown even for $GL(n)$, although it would result from the conjectural description of the residues stated by Jacquet in [11]. If this conjecture was true, that would trivially imply that the operator defined by a smooth, $K$-finite function on the adelic group acting on the discrete spectrum is trace-class.''}

As an application of Theorem \ref{prod}, under some conditions, we prove Clozel's finiteness conjecture of special exponents proposed in \cite{clozel1985conjecture} which plays an essential role in Clozel's firt attempt to proving Howe's finiteness conjecture. Note that Clozel's finiteness conjecture may be checked directly for classical groups from Moeglin--Tadic's work on the classification of discrete series (cf. \cite{moeglin2002construction,moeglin2007classification}). As the conjecture is much of a quantitative result, it should be proved with little forces, instead of resorting to such a big stick. Indeed, our proof is quite natural and may shed some lights on the global analogy. In what follows, we first recall some notions.

There is no harm to assume that $G$ is of compact center. Recall that for a discrete series representation $\pi$ of $G$, we write its associated supercuspidal support as $\sigma$ which is a supercuspidal representation of some Levi subgroup $M$ of $G$. Denote by $\omega_\sigma$ the unramified part of the central character of $\sigma$, i.e. $\omega_\sigma\in \mathfrak{a}_{M,\mathbb{C}}^\star$. Such a character is called a \underline{special exponent}.
\begin{thm}(Clozel's finiteness conjecture)\label{clozel}
	The set of special exponents is finite provided it holds for co-rank one cases.
\end{thm}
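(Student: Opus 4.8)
The plan is to reduce the finiteness of special exponents in $G$ to the co-rank one situation by an inductive, ``hyperplane-counting'' argument built on Theorem \ref{prod}. First I would fix a Levi subgroup $M$ of $G$ and a supercuspidal representation $\sigma$ of $M$; up to the unramified twist we are asking which $\omega\in\mathfrak{a}^\star_{M,\mathbb{C}}$ have the property that $I^G_P(\sigma\otimes\omega)$ admits a discrete series subquotient. The key preliminary observation (already recorded in the introduction, and attributable to Harish-Chandra, cf.\ \cite[Theorem 5.4.5.7]{silberger2015introduction}) is that if $I^G_P(\sigma\otimes\omega)$ has a discrete series constituent then the ``reducibility conditions'' of $I^G_P(\sigma\otimes\omega)$ — the co-rank one reducibility hyperplanes together with the $R$-group walls — cannot all be contained in a proper standard Levi $L$; for otherwise Theorem \ref{prod} would force $I^G_Q(\tau)$ to be irreducible for every $\tau\in JH(I^L_{L\cap P}(\sigma\otimes\omega))$, and a full induced representation from a proper parabolic is never square-integrable, a contradiction. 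Hence the special exponents are exactly the $\omega$ for which these reducibility conditions span $\mathfrak{a}^\star_{M,\mathbb{C}}$.

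Next I would make precise the two families of reducibility conditions. For each relative root $\alpha\in\Phi_M^0$ the co-rank one induction $I^{M_\alpha}_M(\sigma\otimes\omega)$ is reducible on an affine subvariety of $\mathfrak{a}^\star_{M,\mathbb{C}}$; by the co-rank one hypothesis of the theorem this subvariety is a \emph{finite} union of hyperplanes $H_{\alpha,1},\dots,H_{\alpha,k_\alpha}$, with $k_\alpha$ and the hyperplanes themselves independent of $\omega$ (they depend only on $\sigma$ restricted to the relevant co-rank one Levi, and the finiteness there is precisely the assumed co-rank one case of the conjecture together with Langlands--Shahidi / Silberger \cite[Lemma 1.2 \& 1.3]{silberger1980special}). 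Similarly, the $R$-group $R_{\sigma\otimes\omega}$ is governed by which walls $w.(\sigma\otimes\omega)\cong\sigma\otimes\omega$ hold, again a finite collection of affine conditions. So the total collection $\mathcal{H}$ of candidate reducibility hyperplanes, over \emph{all} $\omega$, is finite: call them $H_1,\dots,H_N\subset\mathfrak{a}^\star_{M,\mathbb{C}}$.

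Then I would run the ``invertible matrix'' step. Suppose $\omega$ is a special exponent; by the first paragraph the hyperplanes among $H_1,\dots,H_N$ that actually pass through $\omega$ must span $\mathfrak{a}^\star_{M,\mathbb{C}}$, hence one can pick $r=\dim_{\mathbb{C}}\mathfrak{a}^\star_{M,\mathbb{C}}$ of them, say $H_{i_1},\dots,H_{i_r}$, whose defining affine equations are linearly independent. Their common intersection $H_{i_1}\cap\cdots\cap H_{i_r}$ is then a single point, namely $\omega$ itself (an invertible affine system has a unique solution). Thus every special exponent arises as the unique solution of some size-$r$ independent subsystem of the finite family $\{H_1,\dots,H_N\}$, and there are only finitely many such subsystems; hence finitely many special exponents. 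Finally one reduces the general $M$ to finitely many pairs $(M,\sigma)$: a discrete series of $G$ has a well-defined cuspidal support $(M,\sigma)$ up to $W$-conjugacy, the relevant $M$'s are finite in number, and for each the analysis above is uniform in the unramified twist, which is exactly what absorbs the a priori infinitude of $\sigma$.

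The main obstacle I expect is making the ``reducibility conditions are a finite union of hyperplanes, uniformly in $\omega$'' step genuinely rigorous, i.e.\ verifying that both the co-rank one walls and the $R$-group walls vary algebraically and that the co-rank one hypothesis really delivers a bound on the \emph{number} of walls that does not blow up as the inducing cuspidal is twisted. One must check that the $R$-group $R_{\sigma\otimes\omega}$, while it can jump on a proper subvariety, only ever contributes walls from the fixed finite list $\{w.(\sigma\otimes\omega)=\sigma\otimes\omega : w\in W_M\}$, and that the interplay between the $R$-group walls and the co-rank one walls (the subtlety flagged in the proof of Theorem \ref{prod} about poles of order $2$ and residues) does not create extra reducibility loci outside $\mathcal{H}$. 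Once this uniform-finiteness bookkeeping is in place, the remaining argument — Harish-Chandra's ``no discrete series in a full induced module'' plus elementary linear algebra — is formal.
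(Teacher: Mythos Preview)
Your plan is essentially the paper's own argument: use Theorem \ref{prod} together with the fact that a full induced module is never discrete series to force the reducibility roots to span $\mathfrak{a}_{M,\mathbb{C}}^\star$, then invoke the co-rank one hypothesis to get a finite family of hyperplanes and finish by the ``invertible matrix has a unique solution'' step. The one substantive difference in execution is that the paper does \emph{not} treat the $R$-group conditions as hyperplanes on the same footing as the co-rank one walls; instead it splits $\omega_\sigma$ into its real and imaginary parts. The real part is pinned down exactly as you describe, by intersecting finitely many co-rank one reducibility hyperplanes (whose constants are real and finite in number by assumption). The imaginary part is then handled separately: the only remaining constraints come from the $R$-group, and finiteness follows because there are finitely many possible $R_\sigma\subset W_M$, each is finite, and each element has finite order, so the fixed-point equations $(1-w)\cdot\mathrm{Im}(\omega)=\mu_w$ have only finitely many solutions modulo the lattice.

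This split is precisely what dissolves the obstacle you flag at the end. Your worry that the $R$-group ``walls'' $w.(\sigma\otimes\omega)\cong\sigma\otimes\omega$ are affine subspaces of uncontrolled codimension (not hyperplanes) is legitimate, and your unified invertible-matrix step over $\mathfrak{a}_{M,\mathbb{C}}^\star$ does not go through as written: the co-rank one constants are all real, so $r$ independent co-rank one hyperplanes already determine a unique \emph{real} point and say nothing about $\mathrm{Im}(\omega)$, while the $R$-group contributions are not naturally cut out by a single affine equation each. Separating real and imaginary parts, as the paper does, sidesteps this entirely. One further small correction: it is not true that there are only finitely many pairs $(M,\sigma)$; the supercuspidals $\sigma$ are genuinely infinite in number. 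What makes the argument uniform in $\sigma$ is that the co-rank one hypothesis bounds the \emph{reducibility constants} $s_\alpha$ over all supercuspidals at once, so the finite family $\mathcal{H}$ really is independent of $\sigma$.
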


Before turning to the proof, let us first talk about the main idea.

Under the {\bf Induction Assumption}, i.e.
\[\mbox{Clozel's finiteness conjecture holds for the co-rank one case.}\]
Our proof of Clozel's finiteness conjecture for the general case rests on the following two novel observations:
\begin{enumerate}[(i)]
	\item Theorem \ref{prod}, or ``Product Formula''.
	\item Irreducible induced representation can never be a discrete series.
\end{enumerate}
Roughly speaking, with the help of Muller type theorem of generalized principal series, one knows that the decomposition of $I_P^G(\sigma)$ is governed by the co-rank one reducibility and the $R$-group $R_\sigma$. On the other hand, one knows that a full induced representation can not be a discrete series. In view of Theorem \ref{prod}, thus in order to ensure $\omega_\sigma$ is a special exponent, those roots associated to the co-rank one reducibility and $R_\sigma$ must generate the whole space $\mathfrak{a}_{M,\mathbb{C}}^\star$. Then the conjecture follows from an easy fact of linear algebra, i.e. an invertible matrix has only one solution.

To be more precise, let $P_\Theta=M_\Theta N$ be a standard parabolic subroup of $G$ with $\Theta\subset \Delta$, and let $\sigma$ be a supercuspidal representation of $M_\Theta$. Decomposing $\Theta$ into irreducible pieces
\[\Theta=\Theta_1\sqcup\cdots\sqcup\Theta_n \]
As $W_{M_\Theta}$ acts on $M_\Theta$, then it preserves the decomposition up to sign, so does $R_\sigma$, i.e. preserving
\[\pm\Theta=\pm\Theta_1\sqcup\cdots\sqcup\pm\Theta_n.\] 
Under the action of $R_\sigma$ on $\pm\Theta$, we have a new decomposition of $\pm\Theta$ into irreducible pairs, i.e.
\[R_\sigma\hookrightarrow S_{\pm n}, \]
where $S_{\pm n}$ is the ``pseudo''-permutation group, i.e.
\[S_{\pm n}:=\{\left( a_{i_1}\cdots  a_{i_k}\right):~ a_{i_j}\in \{\pm 1,\cdots,\pm n\} \}/\pm. \]
Here $\pm $-equivalence means that
\[\left(a_{i_1}\cdots a_{i_k}\right)=\left((- a_{i_1})\cdots (-a_{i_k})\right) .\]
For each simple permutation $s=\left(a_{i_1}\cdots  a_{i_k}\right)$, we define the associated roots as, up to scalar,	
\[\Phi_s:=\{e_{i_j}-e_{i_l}:~1\leq j<l\leq k \}, \]
where $e_{i_j}$ is the component character on $\Theta_{i_j}$.

\begin{proof}
	It suffices to prove the finiteness of special exponents for only one parabolic subgroup, like $P_\Theta=M_\Theta N$ with $\sigma$ supercuspidal representations of $M_\Theta$ and $\Theta\subset \Delta$.
	
	Considering the set $\Phi_\sigma$ of roots associated to the co-rank one reducibility and the $R$-group $R_\sigma$, if 
	\[Span_\mathbb{C}\Phi_\sigma\neq \mathfrak{a}_{M,\mathbb{C}}^\star, \]
	then one knows that, up to associated forms, 
	\[Span_\mathbb{C}\Phi_\sigma= \mathfrak{a}_{L,\mathbb{C}}^\star, \]
	for some parabolic subgroup $Q=LV$ with Levi $L\supset M$. Then by Theorem \ref{prod}, we know that 
	\[I^G_Q(\tau)\mbox{ is irreducible for all }\tau\in JH(I_{L\cap P}^L(\sigma)), \]
	which can not be discrete series. Therefore
	\[Span_\mathbb{C}\Phi_\sigma= \mathfrak{a}_{M,\mathbb{C}}^\star.\tag{SP} \]
	Which in turn says that the set of real parts of $\{\omega_\sigma\}_\sigma$ is finite as there are only finitely many rank-one reducibility hyperplanes in $\mathfrak{a}_M^\star$ by the {\bf Induction Assumption}. Indeed, we recently learned that such a claim (SP) is also a corollary of an old result of Harish-Chandra (cf. \cite[Theorem 5.4.5.7]{silberger2015introduction}, \cite[Theorem 3.9.1]{silberger1981discrete} or \cite[Corollary 8.7]{heiermann2004decomposition}).
	
	As for the set of imaginary parts of $\{\omega_\sigma\}_\sigma$, the finiteness follows from the facts that
	\begin{enumerate}[(i)]
		\item There are finitely many $R_\sigma$, and $R_\sigma$ is finite. This says that there are only finitely many linearly independent subsets.
		\item Each element in $R_\sigma$ is of finite order. This says that there are only finitely many solutions for each linearly independent subset.
	\end{enumerate}
Combining the finiteness of the real parts and imaginary parts, Clozel's finiteness conjecture holds under the {\bf Induction Assumption}.	
\end{proof} 
\begin{cor}
	Let $\tilde{G}$ be a finite central covering group of $G$, then Clozel's finiteness conjecture holds for $\tilde{G}$ under the {\bf Induction Assumption}. 
\end{cor}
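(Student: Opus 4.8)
The plan is to show that every ingredient of the proofs of Theorem~\ref{prod} and Theorem~\ref{clozel} survives the passage to a finite central covering $\pi\colon\tilde G\to G$, so that the argument applies essentially verbatim to genuine representations of $\tilde G$. First I would record the structural dictionary: a finite central covering changes neither the root datum nor the apartment, so if $\tilde P=\tilde M\tilde N$ denotes the preimage of a standard parabolic $P=MN$ of $G$, then $\tilde M$ is a Levi of $\tilde G$, the relative root systems $\Phi_M$, $\Phi_M^0$, $\Phi_M^{L,0}$ and the relative Weyl groups $W_M$, $W_M^L$, $W_M(L)$ are literally the same, and the space $\mathfrak{a}^\star_M=X(\tilde M)_F\otimes_\mathbb{Z}\mathbb{R}$ is unchanged, since the finite central kernel is killed by every $F$-rational character. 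The compactness hypothesis on the center also persists, because the preimage of a compact group under a finite-to-one map is compact. Hence the notion of special exponent and the ambient space $\mathfrak{a}^\star_{M,\mathbb{C}}$ are transported without change.

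Next I would check that the representation-theoretic machinery used in Section~3 is available for genuine representations of $\tilde G$. Normalized parabolic induction and the normalized Jacquet functor are defined exactly as before (the modulus character of $\tilde P$ is the pullback of $\delta_P$), and the two facts on which Theorem~\ref{prod} rests hold in this generality: the Bernstein--Zelevinsky geometrical lemma together with the exactness of the Jacquet functor, and the Bruhat--Tits decomposition $(\star\star)$, whose proof only used the internal structure of $W_M$ and $W_M^L$. Likewise the Knapp--Stein $R$-group formalism, the Muller-type irreducibility criterion of \cite{luo2018muller}, the associativity property of intertwining operators, and the rank-one facts about the normalizing factors $j_\alpha(\sigma)$ (Silberger's non-vanishing and finiteness statements \cite{silberger1980special} away from $\Phi_M^0$, and the order-two-pole phenomenon of \cite{savin2007}) are statements that make sense and remain valid for finite central coverings. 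Consequently the \textbf{Working Hypothesis} makes sense for a genuine supercuspidal $\sigma$ of $\tilde M$, and the proof of Theorem~\ref{prod} carries over word for word, giving the universal hierarchical irreducibility criterion for $\tilde G$.

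With Theorem~\ref{prod} in hand for $\tilde G$, I would re-run the proof of Theorem~\ref{clozel}. Given a genuine discrete series $\tilde\pi$ of $\tilde G$ with supercuspidal support $\sigma$ on $\tilde M$, the same alternative applies: either the roots $\Phi_\sigma$ attached to the rank-one reducibility and to $R_\sigma$ span $\mathfrak{a}^\star_{M,\mathbb{C}}$, or they span $\mathfrak{a}^\star_{L,\mathbb{C}}$ for some Levi $L\supsetneq M$, in which case Theorem~\ref{prod} for $\tilde G$ forces every constituent of $I^{\tilde G}_{\tilde P}(\sigma)$ to be a full induced representation from $\tilde Q=\tilde L\tilde V$, hence not a discrete series --- a contradiction. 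So $(\mathrm{SP})$ holds for $\tilde G$; the rank-one reducibility hyperplanes are finite in number by the \textbf{Induction Assumption} applied to the co-rank one situations of $\tilde G$, which pins down the real parts of the $\omega_\sigma$, and the finiteness of the imaginary parts follows as before from the finiteness of the collection of groups $R_\sigma$ together with the finite order of their elements.

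The main obstacle is essentially bookkeeping rather than mathematics: one must make sure that each external input invoked in Section~3 --- the geometrical lemma and exactness of the Jacquet functor, Silberger's and Savin's analyses of the rank-one $j_\alpha$-factors, the Langlands classification used in the generic irreducibility argument, and the Muller-type criterion of \cite{luo2018muller} --- has indeed been established, or can be reduced, in the category of genuine representations of finite central coverings (for the last point, one typically reduces to coverings of groups of semisimple rank one). Once that catalogue is in place, no new ideas are required, and the corollary follows.
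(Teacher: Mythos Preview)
Your proposal is correct and follows essentially the same route as the paper: verify that the structural and representation-theoretic inputs to Theorems~\ref{prod} and~\ref{clozel} persist for finite central coverings, then rerun both arguments verbatim. The paper's proof is simply a one-line compression of this, pointing to \cite{luo2017R} as the reference where the Knapp--Stein $R$-group theory is established for covering groups; your longer catalogue of ingredients (geometrical lemma, $j_\alpha$-factors, Muller-type criterion, etc.) is precisely what that citation together with the Induction Assumption is meant to cover.
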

\begin{proof}
	This follows from the Induction Assumption that there are only finitely many rank-one reducibility hyperplanes in $\mathfrak{a}_{M}^\star$ and the R-group theory in \cite{luo2017R}.
\end{proof}
\begin{rem}
	The Induction Assumption is a byproduct of the profound Langlands--Shahidi theory for generic $\sigma$s (cf. \cite{shahidi1990proof}). But for non-generic $\sigma$s, it follows from two conjectures of Shahidi (see \cite[Conjectures 9.2 and 9.4]{shahidi1990proof}). Indeed, for classical groups, those two conjectures are by-products of applying Arthur's (twisted) stable trace formula machine in his monumental book \cite{arthur2013endoscopic}. Moreover an explicit description of co-rank one reducibility points for classical groups is given in \cite{moeglin2003points,moeglin2002classification,moeglin2002construction,moeglin2007classification}).
\end{rem}
\begin{rem}
	Clozel's finiteness conjecture is also known for low rank groups of which their unitary duals are completely known (cf. \cite{hanzer2006unitary,hanzer2010unitary,konno2001induced,luo2018unitary,luo2018unitary2,matic2010new,muic1998unitary,sally1993induced,schoemann2014unitary}).
\end{rem}

			
\bibliographystyle{spmpsci}
\bibliography{ref}

			
\end{document}